\newtheorem{theorem}{Theorem}[section]
\newtheorem{lemma}[theorem]{Lemma}
\newtheorem{proposition}[theorem]{Proposition}
\theoremstyle{definition}
\newtheorem{definition}[theorem]{Definition}
\def\al{\alpha}
\numberwithin{equation}{section}
\begin{document}

\title[Vertex operators for symplectic/orthogonal Schur functions]
{Vertex operators, Weyl determinant formulae and Littlewood duality}
\author{Naihuan Jing and Benzhi Nie}
\address{Department of Mathematics, North Carolina State University, Raleigh, NC 27695, USA
and School of Mathematical Sciences, South China University of Technology,
Guangzhou, Guangdong 510640, China}
\email{jing@math.ncsu.edu}
\address{School of Mathematical Sciences, South China University of Technology,
Guangzhou, Guangdong 510640, China}
\email{niebenzhi@163.com}

\thanks{{\scriptsize
\hskip -0.4 true cm MSC (2010): Primary: 17B69; Secondary: 05E05.
\newline Keywords: Weyl character formulae,
 vertex operators, symplectic/orthogonal Schur Functions
}}

\maketitle

\begin{abstract}
Vertex operator realizations of symplectic and orthogonal Schur functions are studied and
expanded. New proofs of determinant identities of irreducible characters for
the symplectic and orthogonal groups
are given. We also give a
new proof of the duality between the universal orthogonal and symplectic Schur functions
using vertex operators.
\end{abstract}

\section{Introduction}

Symmetric functions (cf. \cite{Ma}) were used by Hermann Weyl to determine
irreducible characters of highest weight representations of the classical groups \cite{W}
as consequence of the Cauchy identities.
Later Dudley Littlewood \cite{Li} upgraded the approach of symmetric functions and studied
the symplectic and orthogonal Schur functions in the same setting
as the Schur symmetric functions, and showed that in respective
categories $\mathcal O$, the restriction functor $Res^{\mbox{GL}_{2n}}_{\mbox{Sp}_{2n}}$ has the same
decomposition (up to duality)
as that of $Res^{\mbox{GL}_{2n}}_{\mbox{SO}_{2n}}$ or $Res^{\mbox{GL}_{2n+1}}_{\mbox{SO}_{2n+1}}$,
which in turn would imply that symplectic Schur functions are equal to orthogonal Schur functions
with the conjugate Young diagrams.

Vertex operator realization of Schur symmetric functions, though relatively young,
was started in the early days of its appearance in
representations of affine Lie algebras (cf. \cite{Z}, see also \cite{J1}).
Historically
the Kyoto school's fermionic formulation (cf \cite{DJKM} for a beautiful survey) had also used Schur functions
in a remarkable way to understand the KP and KdV hiearachies earlier than the bosonic consideration.
The aim of this paper is to understand both the Weyl
determinant formulae and the Littlewood duality from the vertex operator
viewpoint, and also explain why
many properties of Schur functions are shared by symplectic and orthogonal Schur functions.

In the classic paper \cite{KT} the symplectic and orthogonal Schur functions have been systematically
studied by Koike and Terada following Weyl \cite{W}, where they obtained
determinant formulae in terms of elementary symmetric functions (see also \cite{K}).
Parallel to their approach we will
define certain vertex
operators to realize the Schur symplectic and orthogonal symmetric functions
inside the ring $\Lambda$ of symmetric functions. The vertex operators we constructed are closely related to
Baker's vertex operators \cite{B} who used certain middle terms (similar to \cite{J1}), but they
are not necessary for our purpose (see \cite{J1, J3}).
We then compute directly that the symplectic and orthogonal Schur functions can be realized
by vertex operators within $\Lambda$. Due to Clifford type relations satisfied by
two basis generators we show that
they are actually the same up to a sign, from this we then obtain a new proof for Littlewood's
duality between symplectic and orthogonal
Schur functions. Finally eight determinant formulae are easily derived for both
symplectic and orthogonal Schur functions (see Theorem \ref{T:fourdet}).

We remark that some of the vertex operators were studied by Shimozono and Zabrocki
 \cite{SZ} who constructed symplectic and orthogonal Schur functions
in the language of $\lambda$-rings. Their paper has paved the way to
understand these important symmetric functions better and also contained
some determinant formulae for the irreducible characters. In our current approach
we will emphasize the role of *-operators which can lead to several generalized
formulae, e.g. Eqs. (\ref{e:frobenius3}, \ref{e:frobenius4}, \ref{e:frobenius5}, \ref{e:frobenius6}).
In \cite{FH} determinant
identities for symplectic and orthogonal symmetric polynomials are discussed from matrix consideration,
so vertex operator or $\lambda$-ring approach can be
viewed as another way to prove these determinant formulae (and more identities)
for infinitely many variables.

\section{Ring of symmetric functions}

\hskip\parindent Let $\Lambda=\Lambda_{\mathbb Q}$ be the ring of symmetric functions
 in countably many variables $x_1,x_2, \cdots$ over the field $\mathbb Q$ of rational numbers.
The degree of homogeneous symmetric functions give rise to a natural gradation for $\Lambda_{\mathbb{Q}}$:
\begin{equation}
\Lambda_{\mathbb{Q}}=\bigoplus_{k\geq 0}\Lambda^k_{\mathbb{Q}},
\end{equation}
where $\Lambda^k_{\mathbb{Q}}$~consists of
   the homogeneous symmetric functions of degree $k$.

We recall some basic notations following \cite{Ma}. A {\it partition} is any sequence $\lambda=(\lambda_1,\lambda_2,\cdots,\lambda_r,\cdots)$ of non-negative integers in decreasing order$\colon$$\lambda_1\geq\lambda_2\geq\cdots\geq\lambda_r\geq\cdots$
with only finitely many non-zero terms.~The non-zero~$\lambda_i$~are called the parts of~$\lambda.$~The number of the parts is the length of~$\lambda,$~denoted by~$l(\lambda);$~and the summation of the parts is the weight of~$\lambda,$~denoted by~$|\lambda|$$\colon$$|\lambda|=\lambda_1+\lambda_2+\cdots$. The partition $\lambda$ can be visualized by
 its Ferrers diagram or Young diagram formed by
 aligning $l$ rows of boxes such that there are exactly $\lambda_i$ boxes on the
 $i$th row. If one reflects the Ferrers diagram  along the main diagonal (the $-45^{\circ}$-axis) , the
 associated partition is called the conjugate $\lambda'$ of $\lambda$. The Frobenius
 notation $\lambda=(\al|\beta)=(\al_1\cdots \al_r|\beta_1\cdots \beta_r)$ of the
 Ferrers diagram describes the partition by
 $\al_i=\lambda_i-i, \beta_i=\lambda_i'-i$, where $r$ is the length of the main diagonal of
 $\lambda$.

 If the parts $\lambda_i$ are
not necessarily in descending order, $\lambda$ is called a {\it composition}
and we also
use the same notation $|\lambda|$ for its weight.
If~$|\lambda|=n,$~we say that $\lambda$ is a partition of~$n$.~We also use the other notation $\colon$$\lambda=(1^{m_1}2^{m_2}\cdots r^{m_r}\cdots)$ to mean that exactly $m_i$ of the parts of $\lambda$ are equal to $i$.~The set of partitions will be denoted by $\mathcal{P}$.

The ring $\Lambda_{\mathbb Q}$ has several families of linear bases indexed by partitions. The well-known ones are
the monomial functions $\{m_{\lambda}\}$, the complete homogeneous symmetric functions $h_{\lambda}$, the elementary symmetric function functions $\{e_{\lambda}\}$ and
the power sum symmetric functions $\{p_{\lambda}\}$. They are respectively determined by their finite counterparts:

(i) $m_{\lambda}(x_1, \cdots, x_n)=x^{\lambda}+\mbox{distinct permutations of $x^{\lambda}$}$;

(ii) $h_{k}(x_1, \cdots, x_n)=\sum_{i_1\leq\cdots\leq i_k}x_{i_1}\cdots x_{i_k}$,
and $h_{\lambda}=h_{\lambda_1}\cdots h_{\lambda_l}$;

(ii)
$e_k(x_1, \cdots, x_n)=\sum_{i_1<\cdots<i_k}x_{i_1}\cdots x_{i_k}$, and $e_{\lambda}=e_{\lambda_1}\cdots e_{\lambda_l}$;

(iii) $p_k(x_1, \cdots, x_n)=\sum\limits_{i=1}^{n} x^k_i$, and $p_{\lambda}=p_{\lambda_1}\cdots p_{\lambda_l}$.

The first three bases are in fact $\mathbb Z$-bases, and the power-sum basis $p_\lambda$ is over $\mathbb{Q}$.
The standard inner product in $\Lambda_{\mathbb{Q}}$ is defined by requiring that the power sum symmetric functions are
orthogonal:
\begin{equation}
<p_\lambda,p_\mu>=z_\lambda\delta_{\lambda\mu},
\end{equation}
where
$z_\lambda=\prod\limits_{i}i^{m_i}m_i!$ for $\lambda=(1^{m_1}2^{m_2}\cdots)$. Under this inner product the Schur symmetric functions
are othornormal and are triangular linear combination of the complete homogeneous symmetric functions. For each partition $\lambda$, the Schur function
is defined by
\begin{equation}
\displaystyle s_{\lambda}(x_1, \cdots, x_n)=\frac{\sum_{\sigma\in S_n} sgn(\sigma) x^{\sigma(\lambda+\delta)}}
{\prod_{i<j}(x_i-x_j)},  
\end{equation}
where $\delta=(n-1, n-2, \cdots, 1, 0)$.

Both $h_n$ and $e_n$ are special Schur functions. In fact,
\begin{equation}
h_n=s_{(n)}, \qquad e_n=s_{(1^n)}.
\end{equation}
Their generating functions are expressed in terms of the power-sum $p_n$:
\begin{align}\label{e:homogeneoussymm}
\sum_{n\geq 0}h_nz^n&=exp(\sum_{n=1}^{\infty}\frac{p_n}nz^n), \\
\sum_{n\geq 0}e_nz^n&=exp(-\sum_{n=1}^{\infty}\frac{p_n}n(-z)^n).  \label{e:elementarysymm}
\end{align}
The Jacobi-Trudi formula \cite{Ma} expresses the Schur functions in terms of $h_n$ or $e_n$:
\begin{equation}
s_{\lambda}=det(h_{\lambda_i-i+j})=det(e_{\lambda_i'-i+j}),
\end{equation}
where $\lambda'$ is the conjugate of $\lambda$.

We also define an involution $\omega$: $\Lambda\to \Lambda$ by $\omega(p_n)=(-1)^{n-1}p_n$.
Then it follows that $\omega(h_n)=e_n$. Subsequently we have
\begin{equation}
\omega(s_{\lambda})=s_{\lambda'}.
\end{equation}

\section{Vertex operators and symmetric functions}

In this section, we define certain vertex
operators to realize the Schur symplectic and orthogonal functions
inside $\Lambda$.
These symmetric functions
are studied by Baker \cite{B} using vertex operators with
middle terms, which are not necessary for our purpose (see \cite{J1, J3}). Our approach is based on \cite{J1, J3}, which enables us to prove the duality between symplectic and orthogonal
Schur functions directly. 

 First we turn the ring $\Lambda_{\mathbb Q}$ into a Fock space for the infinite dimensional Heisenberg
 algebra. In fact let $a_{-n}=p_n$ for $n\geq 1$, the multiplication operator on $\Lambda$, and $a_n=n\frac{\partial}{\partial p_n},$~where $p_n(x)=\sum\limits_{i=1}^{\infty} x^n_i$
is the power sum symmetric function.~Then $\{a_n|n\neq 0\}$ and $c=I$ generate the infinite
 dimensional Heisenberg algebra $\mathcal H$ inside $End(\Lambda)$:
 \begin{equation}
 [a_m,a_n]=m\delta_{m,-n}c, \quad [c,a_n]=0.
 \end{equation}
The space $\Lambda$ is then the unique irreducible representation of $\mathcal{H}$ such that
$a_{n}.1=0$ for $n>0$ and $c=1$. The natural hermitian
structure on $\Lambda$ is given by $a_n^*=a_{-n}$. The monomial basis $a_{-\lambda}=p_{\lambda}$ is orthogonal
and
\begin{equation}
<a_{-\lambda}, a_{-\mu}>=z_{\lambda}\delta_{\lambda\mu}.
\end{equation}

First we recall the vertex operator construction of Schur symmetric functions.
Let $S(z)$ and $S^*(z)$ be the Bernstein vertex operators: $\Lambda\rightarrow \Lambda[[z, z^{-1}]]$ defined by
\begin{align}
S(z)&=exp\big(\sum\limits_{n=1}^{\infty}\frac{a_{-n}}{n}z^n\big)
exp\big(-\sum\limits_{n=1}^{\infty}\frac{a_n}{n}z^{-n}\big)\\
&=exp\big(\sum\limits_{n=1}^{\infty}\frac{p_n}{n}z^n\big)
exp\big(-\sum\limits_{n=1}^{\infty}\frac{\partial}{\partial p_n}z^{-n}\big)
=\sum_{n\in\mathbb Z}S_nz^{-n}, \nonumber
\end{align}
\begin{align}
S^*(z)&=exp\big(-\sum\limits_{n=1}^{\infty}\frac{a_{-n}}{n}z^n\big)
exp\big(\sum\limits_{n=1}^{\infty}\frac{a_n}{n}z^{-n}\big)\\
&=exp\big(-\sum\limits_{n=1}^{\infty}\frac{p_n}{n}z^n\big)
exp\big(\sum\limits_{n=1}^{\infty}\frac{\partial}{\partial p_n}z^{-n}\big)
=\sum_{n\in\mathbb Z}S_n^*z^{n}.\nonumber
\end{align}

In the following result the first realization of
Schur functions by $S(z)$ was given by Bernstein \cite{Z}. The second realization
using the dual operators was first proved in \cite{J1}
and the last realization (\ref{e:frobenius1}) is special case proved in \cite{J3}.
\begin{proposition} \label{P:schur}(\cite{J3}, Th. 2.7) The operator product expansions are given by
\begin{align}
S(z)S(w)&=:S(z)S(w):(1-wz^{-1}),\\
S^*(z)S^*(w)&=:S^*(z)S^*(w):(1-wz^{-1}),\\
S(z)S^*(w)&=:S(z)S^*(w):(1-wz^{-1})^{-1},
\end{align}
where $|w|<min\{|z|, |z|^{-1}\}$, and the last rational functions are understood as power series expansions
in the second variable $w$. Moreover for any partition $\lambda$, the following give four realizations of Schur functions
\begin{align}\label{e:Schur-ip}
s_{\lambda}&=S_{-\lambda_1}\cdots S_{-\lambda_l}.1=(-1)^{|\lambda|}S^*_{\lambda_1'}\cdots S^*_{\lambda_k'}.1,\\
s_{(\al|\beta)}&=(-1)^{|\beta|+r(r-1)/2}S_{-\al_1-1}\cdots S_{-\al_r-r}S_{\beta_1-(r-1)}^*S_{\beta_2-(r-2)}^*\cdots S_{\beta_r}^*.1, \label{e:frobenius1}\\ \label{e:frobenius1}
s_{(\alpha|\beta)}&=(-1)^{|\beta|+r}S_{\beta_1-1}^*\cdots S_{\beta_r-r}^*S_{-\al_1+(r-1)}\cdots S_{-\al_r}.1,
\end{align}
where $(\al|\beta)$ is the Frobenius notation of a partition, i.e., $\al_i\geq \al_{i-1}+1$ and $\beta_i\geq \beta_{i-1}+1$.
\end{proposition}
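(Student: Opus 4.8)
The plan is to establish the three operator product expansions by normal ordering, and then to extract from them the mode identities that drive all four realizations. For the OPEs I would factor each operator into creation and annihilation halves, $S(z)=\Gamma_+(z)\Gamma_-(z)$ with $\Gamma_+(z)=\exp\big(\sum_{n\ge1}\frac{a_{-n}}{n}z^n\big)$ and $\Gamma_-(z)=\exp\big(-\sum_{n\ge1}\frac{a_n}{n}z^{-n}\big)$, and analogously for $S^*(z)$ with the signs reversed. Since each $[a_m,a_n]=m\delta_{m,-n}$ is central, the Baker--Campbell--Hausdorff identity collapses every product to its normal-ordered form times the scalar $\exp([\,\text{annihilation half},\ \text{creation half}\,])$. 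The one computation needed is the single contraction $\big[-\sum_n\frac{a_n}{n}z^{-n},\ \sum_m\frac{a_{-m}}{m}w^m\big]=-\sum_{n\ge1}\frac1n(w/z)^n=\log(1-w/z)$, which exponentiates to $(1-wz^{-1})$; for the mixed product the overall sign flips and one gets $(1-wz^{-1})^{-1}$. This produces the three OPEs simultaneously, in the region $|w|<\min\{|z|,|z|^{-1}\}$ with the stated expansion convention.

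For the realizations I would first record the mode-level consequences. Comparing coefficients in $S(z)S(w)=:S(z)S(w):(1-wz^{-1})$ and using that the normal-ordered product is symmetric under $z\leftrightarrow w$ yields the straightening relation $S_mS_n=-S_{n-1}S_{m+1}$, and likewise $S^*_mS^*_n=-S^*_{n-1}S^*_{m+1}$; the mixed OPE gives the locality relation $S(z)S^*(w)(1-w/z)=S^*(w)S(z)(1-z/w)$. Combined with $S_{-n}.1=h_n$ and $S^*_n.1=(-1)^ne_n$, read off directly from the generating functions (\ref{e:homogeneoussymm}) and (\ref{e:elementarysymm}), the first realization $s_\lambda=S_{-\lambda_1}\cdots S_{-\lambda_l}.1$ follows by induction on $\ell(\lambda)$: the straightening relation antisymmetrizes the word $S_{-\lambda_1}\cdots S_{-\lambda_l}$ exactly as the Jacobi--Trudi determinant $s_\lambda=\det(h_{\lambda_i-i+j})$ does, which is Bernstein's theorem \cite{Z}.

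The dual realization and the reflected Frobenius formula I would then obtain for free from the involution $\omega$. From $\omega(p_n)=(-1)^{n-1}p_n$ one computes $\omega S(z)\omega^{-1}=S^*(-z)$, hence on modes $\omega S_m\omega^{-1}=(-1)^mS^*_{-m}$ and $\omega S^*_m\omega^{-1}=(-1)^mS_{-m}$. Applying $\omega$ to $s_\lambda=S_{-\lambda_1}\cdots S_{-\lambda_l}.1$ and using $\omega(s_\lambda)=s_{\lambda'}$, then replacing $\lambda$ by $\lambda'$, gives the dual expression $s_\lambda=(-1)^{|\lambda|}S^*_{\lambda'_1}\cdots S^*_{\lambda'_k}.1$; applying $\omega$ to (\ref{e:frobenius1}) and using that conjugation swaps the Frobenius coordinates $(\alpha|\beta)\mapsto(\beta|\alpha)$ turns it into the last realization. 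The only care needed is the sign bookkeeping, which collapses via $r^2\equiv r\pmod 2$.

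The substantive step is (\ref{e:frobenius1}) itself, and here I would normal-order the entire word at once. Applying the three OPEs pairwise gives
\[
S(z_1)\cdots S(z_r)S^*(w_1)\cdots S^*(w_r).1=\prod_{i<i'}\Big(1-\tfrac{z_{i'}}{z_i}\Big)\prod_{j<j'}\Big(1-\tfrac{w_{j'}}{w_j}\Big)\prod_{i,j}\Big(1-\tfrac{w_j}{z_i}\Big)^{-1}:S(z_1)\cdots S^*(w_r):.1,
\]
where $:S(z_1)\cdots S^*(w_r):.1=\exp\big(\sum_{n\ge1}\tfrac{p_n}{n}(\sum_i z_i^n-\sum_j w_j^n)\big)$ is the Cauchy kernel. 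Extracting the coefficient of $\prod_i z_i^{\alpha_i+i}\prod_j w_j^{\beta_j-(r-j)}$, the two Vandermonde-type prefactors effect the antisymmetrization that converts the product of independent single-variable extractions into the determinant $\det\big(s_{(\alpha_i|\beta_j)}\big)_{1\le i,j\le r}$ of hook Schur functions, each hook being the $r=1$ case $s_{(\alpha_i|\beta_j)}=(-1)^{\beta_j}S_{-\alpha_i-1}S^*_{\beta_j}.1$; the classical Giambelli formula $s_{(\alpha|\beta)}=\det(s_{(\alpha_i|\beta_j)})$ (see \cite{Ma}) then closes the argument. I expect the main obstacle to be precisely this extraction: one must verify that the mixed factor $\prod_{i,j}(1-w_j/z_i)^{-1}$ together with the two Vandermonde products yields exactly the antisymmetric (determinantal) combination with no spurious diagonal resonance terms, and one must track the accumulated sign down to $(-1)^{|\beta|+r(r-1)/2}$ --- the $(-1)^{|\beta|}$ coming from the $r$ hook factors and the $r(r-1)/2$ from the reordering.
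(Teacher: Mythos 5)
The paper itself offers no proof of this proposition --- it is quoted from \cite{Z}, \cite{J1} and \cite{J3} --- so your argument has to stand on its own. Most of it does. The BCH/contraction computation of the three OPEs is correct, as is the derivation of $\omega S(z)\omega^{-1}=S^*(-z)$, hence $\omega S_m\omega^{-1}=(-1)^mS^*_{-m}$, and the passage from the first realization to the second. (One index slip: comparing coefficients in $zS(z)S(w)+wS(w)S(z)=0$ gives $S_mS_n=-S_{n+1}S_{m-1}$, not $S_mS_n=-S_{n-1}S_{m+1}$; your starred relation is the correct one.) Your plan for (\ref{e:frobenius1}) is also workable, and the ``resonance'' worry you flag is resolved by naming the tool you are implicitly using: the Cauchy determinant identity. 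Indeed
\begin{equation*}
\prod_{i<i'}\Bigl(1-\tfrac{z_{i'}}{z_i}\Bigr)\prod_{j<j'}\Bigl(1-\tfrac{w_{j'}}{w_j}\Bigr)\prod_{i,j}\Bigl(1-\tfrac{w_j}{z_i}\Bigr)^{-1}
=(-1)^{r(r-1)/2}\prod_iz_i^{\,i-1}\prod_jw_j^{-(r-j)}\det\Bigl(\tfrac{1}{1-w_j/z_i}\Bigr),
\end{equation*}
so the coefficient extraction literally becomes $(-1)^{r(r-1)/2}\sum_{\sigma}\mathrm{sgn}(\sigma)\prod_i S_{-\alpha_i-1}S^*_{\beta_{\sigma(i)}}.1$, a determinant of hook extractions with no cross terms. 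Note, however, that the hook evaluation $S_{-a-1}S^*_b.1=(-1)^bs_{(a|b)}$ is the $r=1$ instance of the very formula being proved, so it must be checked directly (expand $(1-w/z)^{-1}H(z)E(-w)$ and compare with $s_{(a|b)}=\sum_{k\geq 0}(-1)^kh_{a+1+k}e_{b-k}$) rather than quoted via Giambelli; with that base case your argument for (\ref{e:frobenius1}) closes.

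The genuine gap is the last realization. Carry out the conjugation you describe: $\omega$ sends $S_{-\alpha_i-i}\mapsto(-1)^{\alpha_i+i}S^*_{\alpha_i+i}$ and $S^*_{\beta_j-(r-j)}\mapsto(-1)^{\beta_j-(r-j)}S_{-\beta_j+(r-j)}$, so applying it to (\ref{e:frobenius1}) and relabelling $(\alpha|\beta)\mapsto(\beta|\alpha)$ yields
\begin{equation*}
s_{(\alpha|\beta)}=(-1)^{|\beta|+r(r+1)/2}\,S^*_{\beta_1+1}S^*_{\beta_2+2}\cdots S^*_{\beta_r+r}\,S_{-\alpha_1+(r-1)}\cdots S_{-\alpha_r}.1,
\end{equation*}
with $S^*$-indices $\beta_j+j$, not $\beta_j-j$, and sign exponent $r(r+1)/2$, not $r$. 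This does not coincide with the stated third realization, and no bookkeeping will make it do so, because the stated formula is false as printed: for $r=1$, $\alpha_1=\beta_1=0$ it reads $s_{(1)}=-S^*_{-1}S_0.1$, but $S_0.1=1$ and $S^*_{-1}.1=0$ (since $S^*(z).1$ has only nonnegative powers of $z$), so the right-hand side vanishes while $s_{(1)}=h_1\neq 0$. Your method is the right one --- indeed it exposes what is presumably a transcription error from \cite{J3} --- but the step where you assert the signs ``collapse via $r^2\equiv r\pmod 2$'' onto the printed formula is exactly where the discrepancy sits, and as written that step fails.
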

Note that $S(z).1=exp\big(\sum\limits_{n=1}^{\infty}\frac{a_{-n}}{n}z^n\big)$, therefore
$S_0.1=1$, which means that $S_0$ and similarly $S^*_0$ will not be needed in the realization.

We introduce the vertex operator for the symplectic Schur functions as follows.


\begin{definition} Let $Y(z)$ and $Y^*(z)$ be the vertex operators: $\Lambda\rightarrow \Lambda[[z, z^{-1}]]$ defined by
\begin{align}
Y(z)&=Y(a, z)=exp\big(\sum\limits_{n=1}^{\infty}\frac{a_{-n}}{n}z^n\big)
exp\big(-\sum\limits_{n=1}^{\infty}\frac{a_n}{n}(z^{-n}+z^n)\big)\\
&=exp\big(\sum\limits_{n=1}^{\infty}\frac{p_n}{n}z^n\big)
exp\big(-\sum\limits_{n=1}^{\infty}\frac{\partial}{\partial p_n}(z^{-n}+z^n)\big)
=\sum_{n\in\mathbb Z}Y_nz^{-n}, \nonumber
\end{align}
\begin{align}
Y^*(z)&=(1-z^2)exp\big(-\sum\limits_{n=1}^{\infty}\frac{a_{-n}}{n}z^n\big)
exp\big(\sum\limits_{n=1}^{\infty}\frac{a_n}{n}(z^{-n}+z^n)\big)=(1-z^2)W^*(z)\\
&=(1-z^2)exp\big(-\sum\limits_{n=1}^{\infty}\frac{p_n}{n}z^n\big)
exp\big(\sum\limits_{n=1}^{\infty}\frac{\partial}{\partial p_n}(z^{-n}+z^n)\big)
=\sum_{n\in\mathbb Z}Y_n^*z^{n}, \nonumber
\end{align}
\end{definition}
The operator $Y(z)$ coincides with $V^{(1^2)}(z)$ in \cite{FJK} and was also studied
in \cite{SZ} in $\lambda$-ring language.
We will emphasize the role of $Y^*(z)$ in our approach.
Note that the intermediate vertex operator $W^*(z)=Y(-a, z)$ is obtained from $Y(a, z)$ by
formally changing $a_n$ to $-a_n$. Below we will use $W^*(z)$ for the orthogonal Schur functions.

The operators $Y_n$ and $Y_n^*$ are well-defined operators on the Fock space $\Lambda$.
If one views the variable $z$ as a complex number, then they are Fourier coefficients of the
vertex operators $Y(z)$ and $Y^*(z)$. The operator $Y_n^*$ and the intermediate operator
$W_n^*$ are related by
\begin{align}
Y^*_n&=W^*_n-W^*_{n-2},\\
W^*_n&=Y^*_n+Y_{n-2}^*+Y_{n-4}^*+\cdots,
\end{align}
where the second identity is viewed as a locally finite operator, i.e. it is well-defined on any finite
dimensional subspace of $\Lambda$.

The normal order product is defined as usual. For example,
\begin{align*}
:Y(z)Y(w):&=exp\big(\sum\limits_{n=1}^{\infty}\frac{p_n}{n}
z^n+\sum\limits_{n=1}^{\infty}\frac{p_n}{n}w^n\big)\\[0.4cm]
&\cdot exp\big(-\sum\limits_{n=1}^{\infty} n\frac{\partial}{\partial p_n}
(\frac{z^n+z^{-n}}{n}+\frac{w^n+w^{-n}}{n})\big). 
\end{align*}

\begin{proposition} The operator product expansions are given by
\begin{align}\label{ope1}
Y(z)Y(w)&=:Y(z)Y(w):(1-wz^{-1})(1-wz),\\
Y^*(z)Y^*(w)&=:W^*(z)W^*(w):(1-z^2)(1-w^2)(1-wz^{-1})(1-wz),\\
Y(z)Y^*(w)&=:Y(z)W^*(w):\frac{1-w^2}{(1-wz^{-1})(1-wz)},\label{ope2}\\
Y^*(z)Y(w)&=:Y^*(z)W(w):\frac{1-z^2}{(1-wz^{-1})(1-wz)}, \label{ope3}
\end{align}
where $|w|<min\{|z|, |z|^{-1}\}$, and the rational functions appeared on the right are understood as power series expansions in the second variable $w$.
\end{proposition}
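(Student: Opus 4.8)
The plan is to compute each operator product expansion (OPE) directly from the exponential formulas, using the standard Heisenberg commutation relations $[a_m,a_n]=m\delta_{m,-n}$ to bring the product of two vertex operators into normal-ordered form. The key computational tool is the Baker–Campbell–Hausdorff identity: if $A=\sum_{n\ge 1}\alpha_n a_n$ and $B=\sum_{n\ge 1}\beta_n a_{-n}$ are built from annihilation and creation operators respectively, then $e^{A}e^{B}=e^{B}e^{A}e^{[A,B]}$ where $[A,B]=\sum_{n\ge 1} n\alpha_n\beta_n$ is a scalar. So for each OPE I would isolate the ``left'' annihilation exponential of the first operator and the ``right'' creation exponential of the second operator, swap them past each other, and read off the scalar factor; everything else assembles into the stated normal-ordered symbol.

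Concretely, for $Y(z)Y(w)$ the annihilation part of $Y(z)$ carries exponent $-\sum_n \tfrac{a_n}{n}(z^{-n}+z^n)$ and the creation part of $Y(w)$ carries $+\sum_n \tfrac{a_{-n}}{n}w^n$. Their commutator is the scalar $\sum_{n\ge 1}\tfrac{1}{n}\bigl((w/z)^n+(wz)^n\bigr)$, and exponentiating gives $\exp\sum_n\tfrac1n\bigl((wz^{-1})^n+(wz)^n\bigr)=(1-wz^{-1})^{-1}(1-wz)^{-1}$. But the exponent of the scalar enters with the \emph{opposite} sign convention here because the annihilation exponent is negative, so I would track the signs carefully; the net effect yields the factor $(1-wz^{-1})(1-wz)$ in \eqref{ope1}, using $\exp(-\sum_n \tfrac{t^n}{n})=(1-t)$. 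I would then repeat the identical calculation for $Y^*(z)Y^*(w)$, remembering the prefactors $(1-z^2)(1-w^2)$ that convert $W^*$ into $Y^*$, and for the mixed products $Y(z)Y^*(w)$ and $Y^*(z)Y(w)$ where the creation and annihilation exponents now carry \emph{opposite} signs, flipping the rational factor into the numerator. In the mixed case \eqref{ope2} the first operator's annihilation exponent $-\sum_n\tfrac{a_n}{n}(z^{-n}+z^n)$ meets the creation exponent $-\sum_n\tfrac{a_{-n}}{n}w^n$ of $W^*(w)$, producing the scalar $\exp\bigl(\sum_n\tfrac1n((wz^{-1})^n+(wz)^n)\bigr)=(1-wz^{-1})^{-1}(1-wz)^{-1}$; multiplying by the surviving prefactor $(1-w^2)$ gives exactly the claimed $\tfrac{1-w^2}{(1-wz^{-1})(1-wz)}$.

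The main obstacle, and the place I would spend most care, is bookkeeping rather than any genuine difficulty: four products each involve two sign choices (one per exponent), and the prefactors $(1-z^2)$ and $(1-w^2)$ must be carried through without being absorbed into the normal-ordered symbol. I would also verify the convergence stipulation $|w|<\min\{|z|,|z|^{-1}\}$ is exactly what is needed so that both geometric series $\sum_n (wz^{-1})^n$ and $\sum_n (wz)^n$ converge, which is why \emph{two} factors appear (the $z^n$ term inside $Y(z)$'s annihilation part, absent in the Bernstein operator $S(z)$, is precisely what produces the extra $(1-wz)$ relative to Proposition~\ref{P:schur}). Finally I would confirm that the normal-ordered symbols on the right genuinely equal $:Y(z)Y(w):$, $:W^*(z)W^*(w):$, etc., by checking that the creation and annihilation exponents combine as written in the definition of the normal order product given just above the statement.
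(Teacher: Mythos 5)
Your plan is correct, and it is exactly the standard computation: the paper states this proposition without proof (for this kind of contraction it elsewhere appeals to ``the standard normal order product and Wick's theorem''), and your Baker--Campbell--Hausdorff bookkeeping $e^Ae^B=e^Be^Ae^{[A,B]}$ with $[A,B]$ central is precisely that argument. The scalars come out as you describe: for $Y(z)Y(w)$ the relevant commutator is $-\sum_{n\ge1}\frac1n\bigl((wz^{-1})^n+(wz)^n\bigr)$ (the minus sign you flag), exponentiating to $(1-wz^{-1})(1-wz)$; in the mixed products the two sign choices cancel, so the same product lands in the \emph{denominator} --- your phrase ``flipping the rational factor into the numerator'' is a slip of wording, but the explicit computation you then give for $Y(z)Y^*(w)$ is the right one, and the convergence condition $|w|<\min\{|z|,|z|^{-1}\}$ is, as you say, exactly what both geometric series require.

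One point your final verification step will actually catch, so be prepared for it: carrying out your calculation for the fourth product gives
\begin{equation*}
Y^*(z)Y(w)=(1-z^2)\,W^*(z)Y(w)=\,:W^*(z)Y(w):\,\frac{1-z^2}{(1-wz^{-1})(1-wz)},
\end{equation*}
whereas (\ref{ope3}) displays the symbol $:Y^*(z)W(w):$. Read literally, scalar prefactors pull out of normal ordering, so $:Y^*(z)W(w):\,=(1-z^2)(1-w^2)\,:W^*(z)Y(w):$, and the printed right-hand side would then be off by the factor $(1-z^2)(1-w^2)$. This is a typo in the statement rather than a flaw in your plan: the symbol must be read as $:W^*(z)Y(w):$ (the prefactors are not to be counted twice), which is consistent with how the second and third expansions are written, and the form you derive is the one actually used in the paper's proof of Theorem \ref{T:comm1}, where both mixed products are expressed against the single symbol $:Y(z)W^*(w):$ --- equal to $:W^*(w)Y(z):$ since factors inside a normal-ordered product commute. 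So prove the fourth expansion in the corrected form; everything else in your outline goes through verbatim.
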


\begin{theorem} \label{T:comm1} The operators $Y_n$ and $Y_n^*$ satisfy the following generalized
Clifford algebra relations:
\begin{align*}
Y_mY_n+Y_{n+1}Y_{m-1}&=0, \\ 
Y^*_mY^*_n+Y^*_{n-1}Y^*_{m+1}&=0,\\
Y_mY_n^*+Y_{n+1}^*Y_{m+1}&=\delta_{m, n}.
\end{align*}
\end{theorem}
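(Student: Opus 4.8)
The plan is to package each of the three relations as a single identity between the full vertex operators and then read off coefficients. Since $Y(z)=\sum_n Y_nz^{-n}$, summing the first relation against $z^{-m}w^{-n}$ over all $m,n$ gives
\begin{equation*}
\sum_{m,n}\bigl(Y_mY_n+Y_{n+1}Y_{m-1}\bigr)z^{-m}w^{-n}=Y(z)Y(w)+\frac{w}{z}Y(w)Y(z),
\end{equation*}
where the index shift $m\mapsto m-1,\ n\mapsto n+1$ in the second sum produces the monomial $w/z$ and reverses the order of the operators. The identical bookkeeping, now with $Y^*(z)=\sum_n Y^*_nz^{n}$, turns the second relation into $Y^*(z)Y^*(w)+\frac{w}{z}Y^*(w)Y^*(z)$, and the third into $Y(z)Y^*(w)+\frac{z}{w}Y^*(w)Y(z)$. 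Thus it suffices to show that the first two combinations vanish identically and that the third equals the formal delta series $\delta(w/z)=\sum_m(w/z)^m$, since comparing coefficients then reproduces the three stated relations.

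For the first two I would substitute the operator product expansion (\ref{ope1}) and its $Y^*$-analogue, using that the normal-ordered products $:Y(z)Y(w):$ and $:W^*(z)W^*(w):$ are symmetric in $z,w$. Writing $(1-wz^{-1})(1-wz)=1-w(z+z^{-1})+w^2$, the entire point is the elementary cancellation
\begin{equation*}
\bigl(1-w(z+z^{-1})+w^2\bigr)+\frac{w}{z}\bigl(1-z(w+w^{-1})+z^2\bigr)=0,
\end{equation*}
which makes $Y(z)Y(w)+\frac{w}{z}Y(w)Y(z)$ and its starred version identically zero. Here the extra factor $(1-z^2)(1-w^2)$ in the $Y^*$-expansion is symmetric in $z,w$ and multiplies both terms equally, so it does not disturb the cancellation; extracting coefficients yields the first two Clifford relations.

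The third relation is the main obstacle, because now the two rational prefactors do not cancel as rational functions but instead combine into a delta function. Pulling $(1-w^2)$ out of $Y^*(w)=(1-w^2)W^*(w)$ and applying (\ref{ope2}) together with the $z\leftrightarrow w$ form of (\ref{ope3}) (noting $:Y(z)W^*(w):\,=\,:W^*(w)Y(z):$), I would rewrite the third combination as $:Y(z)W^*(w):$ times
\begin{equation*}
(1-w^2)\Bigl[\iota_{w,z}\tfrac{1}{(1-wz^{-1})(1-wz)}+\tfrac{z}{w}\,\iota_{z,w}\tfrac{1}{(1-zw^{-1})(1-zw)}\Bigr],
\end{equation*}
where $\iota_{w,z}$ and $\iota_{z,w}$ denote expansion in the domains $|w|<|z|$ and $|z|<|w|$ respectively. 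The crucial observation is that the two rational functions equal $\pm\frac{z}{(z-w)(1-wz)}$, i.e. they are negatives of one another, but are expanded in complementary domains; factoring off the common analytic series $(1-wz)^{-1}=\sum_{k\ge0}(wz)^k$, the remaining difference $\iota_{w,z}\frac{z}{z-w}-\iota_{z,w}\frac{z}{z-w}$ is exactly $\delta(w/z)$. Finally I would invoke the substitution property of the delta to set $w=z$ in the surviving factors: $(1-w^2)$ cancels $(1-wz)^{-1}$, and $:Y(z)W^*(z):\,=1$ since its combined creation and annihilation exponents $z^n-z^n$ and $(w^{-n}+w^n-z^{-n}-z^n)\big|_{w=z}$ both vanish. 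Hence the third combination equals $\delta(w/z)$, and comparing coefficients of $z^{-m}w^n$ gives $Y_mY^*_n+Y^*_{n+1}Y_{m+1}=\delta_{m,n}$. The delicate bookkeeping of the two expansion domains and the collapse of the normal-ordered product to the identity on the delta support is where all the real content of the computation sits.
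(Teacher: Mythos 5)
Your proof is correct and follows essentially the same route as the paper's: you form the same antisymmetrized combinations (the paper multiplies by $z$ and $w$ where you divide, which is equivalent), kill the first two via the Laurent-polynomial cancellation $(z-w)(1-zw)+(w-z)(1-wz)=0$ hidden in the OPE (\ref{ope1}), and obtain the third from the complementary expansions of $(z-w)^{-1}$ summing to the formal delta, followed by the substitution $w=z$ in $:Y(z)W^*(w):\frac{1-w^2}{1-wz}$. The only difference is that you spell out the final delta-substitution step (the cancellation of $(1-w^2)$ against $(1-wz)^{-1}$ and $:Y(z)W^*(z):\,=1$) which the paper leaves implicit.
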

\begin{proof}
It follows from (\ref{ope1}) that
\begin{align*}
zY(z)&Y(w)+wY(w)Y(z)\\
&=:Y(z)Y(w):\{(z-w)(1-zw)+(w-z)(1-wz)\}=0.
\end{align*}
Taking coefficients one proves the first two relations. Similarly  we derive the following relation by using (\ref{ope2}-\ref{ope3}).
\begin{align*}
&z^{-1}Y(z)Y^*(w)+w^{-1}Y^*(w)Y(z)\\
&=:Y(z)W^*(w):\frac{1-w^2}{1-zw}\{(z-w)^{-1}+(w-z)^{-1}\}\\
&=:Y(z)W^*(w):\frac{1-w^2}{1-zw} z^{-1}\delta(\frac wz)\\
&=z^{-1}\delta(\frac wz)
\end{align*}

\end{proof}

Besides the operator $W^*(z)$, we also introduce the following vertex operators for the orthogonal Schur functions.

\begin{definition} The vertex operator $W(z)$
is defined as the vertex operators from $\Lambda$ to $\Lambda[[z, z^{-1}]]$  given by
\begin{align}
W(z)&=(1-z^2)exp\big(\sum\limits_{n=1}^{\infty}\frac{p_n}{n}z^n\big)
exp\big(-\sum\limits_{n=1}^{\infty}\frac{\partial}{\partial p_n}(z^{-n}+z^n)\big)=(1-z^2)Y(z)\\
&=\sum\limits_{n\in\mathbb{Z}}W_nz^{-n}.   \nonumber
\end{align}
\end{definition}
We remark that the operator $W(z)$ was denoted as $V^{(2)}(z)$ in \cite{FJK}.
\begin{proposition} \label{P:comm2} The operators $W_n$ and $W_n^*$ satisfy the following generalized
Clifford algebra relations:
\begin{align*}
W_mW_n+W_{n+1}W_{m-1}&=0, \\
W^*_mW^*_n+W^*_{n-1}W^*_{m+1}&=0,\\
W_mW_n^*+W_{n+1}^*W_{m+1}&=\delta_{m,n}.
\end{align*}
Moreover, the orthogonal and symplectic vertex operators are related by:
\begin{align*}
W_n&=Y_n-Y_{n-2}, \qquad Y^*_n=W^*_n-W^*_{n+2},\\
Y_n&=W_n+W_{n-2}+\cdots, \qquad W^*_n=Y^*_n+Y^*_{n+2}+\cdots.
\end{align*}
\end{proposition}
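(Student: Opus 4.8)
The plan is to reduce both families of relations to the corresponding facts for the symplectic operators $Y_n, Y_n^*$ proved in Theorem \ref{T:comm1}, exploiting only the scalar generating-function identities $W(z)=(1-z^2)Y(z)$ and $Y^*(z)=(1-z^2)W^*(z)$. The guiding observation is that the prefactors $(1-z^2)$ and $(1-w^2)$ are central scalars that pass freely through normal ordering, so when they are substituted into the operator product expansions of the preceding Proposition they cancel completely, and $W,W^*$ inherit \emph{exactly} the same operator product expansions as $Y,Y^*$.

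First I would record the operator product expansions of $W$ and $W^*$. Writing $W(z)=(1-z^2)Y(z)$ and $W^*(w)=(1-w^2)^{-1}Y^*(w)$ and feeding these into (\ref{ope1}), the $Y^*(z)Y^*(w)$ expansion, and (\ref{ope2}--\ref{ope3}), the scalar factors cancel and one is left with
\begin{align*}
W(z)W(w)&=\,:W(z)W(w):(1-wz^{-1})(1-wz),\\
W^*(z)W^*(w)&=\,:W^*(z)W^*(w):(1-wz^{-1})(1-wz),\\
W(z)W^*(w)&=\,:W(z)W^*(w):\frac{1}{(1-wz^{-1})(1-wz)}.
\end{align*}
These are formally identical to the expansions satisfied by $Y,Y^*$, which is the heart of the matter.

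With these expansions in hand, the generalized Clifford relations follow by repeating verbatim the argument in the proof of Theorem \ref{T:comm1}. For the first two relations I would form $zW(z)W(w)+wW(w)W(z)$ and the analogous combination for $W^*$; the symmetric cancellation $(z-w)(1-zw)+(w-z)(1-wz)=0$ shows these vanish, and extracting the coefficient of the appropriate monomial yields $W_mW_n+W_{n+1}W_{m-1}=0$ together with its starred analogue. For the mixed relation I would form $z^{-1}W(z)W^*(w)+w^{-1}W^*(w)W(z)$; the two rational functions combine, through the formal distribution identity $(z-w)^{-1}+(w-z)^{-1}=z^{-1}\delta(w/z)$, into a delta function, and evaluating the result at $w=z$ — where the normal-ordered symbol together with the surviving scalar factor $(1-z^2)/(1-zw)$ collapses to the constant $1$ (concretely because $:Y(z)W^*(z):=1$) — produces $z^{-1}\delta(w/z)$, whose Fourier coefficients give $W_mW_n^*+W_{n+1}^*W_{m+1}=\delta_{m,n}$.

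Finally, the linear relations between the orthogonal and symplectic operators come directly from comparing Fourier coefficients in $W(z)=(1-z^2)Y(z)$ and $Y^*(z)=(1-z^2)W^*(z)$: comparing coefficients of $z^{-n}$ (resp. $z^n$) produces the two-term relation expressing $W_n$ through the $Y$'s (resp. $Y_n^*$ through the $W^*$'s), while expanding the inverse scalar $(1-z^2)^{-1}=\sum_{k\ge 0}z^{2k}$ as a geometric series expresses each $Y_n$ (resp. $W_n^*$) as a one-sided infinite sum. The point needing care here is that these inverse series are infinite, so one must verify they are \emph{locally finite}, i.e. that on each homogeneous component of $\Lambda$ only finitely many summands act nontrivially; since $W_m$ (like $Y_m$) shifts degree by a fixed amount, the requirement of local finiteness is exactly what selects the correct one-sided direction of the expansion, and all remaining summands annihilate any given vector for degree reasons. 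I expect the main obstacle to be precisely this bookkeeping: ensuring the $(1-z^2)(1-w^2)$ prefactors cancel in just the right way in the mixed expansion so that the residue at $w=z$ yields the clean constant $\delta_{m,n}$, and treating the delta-function step as a genuine identity of formal distributions rather than naively equating the two expansions of $(z-w)^{-1}$.
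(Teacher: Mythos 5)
Your proposal is correct and is essentially the paper's own argument: the paper states Proposition \ref{P:comm2} without proof, remarking only that $W_n$, $W_n^*$ satisfy ``the same relations'' as $Y_n$, $Y_n^*$ do in Theorem \ref{T:comm1}, and your route --- the scalar prefactors $(1-z^2)$, $(1-w^2)$ pass through normal ordering and cancel, so $W$, $W^*$ inherit the $Y$, $Y^*$ operator product expansions verbatim, after which the anticommutation argument of Theorem \ref{T:comm1} repeats word for word --- is exactly the intended one. Your bookkeeping in the mixed relation (the surviving factor $(1-z^2)/(1-zw)$ together with $:\!Y(z)W^*(z)\!:\,=1$ collapsing to the constant $1$ on the diagonal $w=z$) is also right.

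One point you should make explicit, because it exposes a typo in the statement you are proving. With the paper's conventions $Y(z)=\sum_n Y_nz^{-n}$, $W(z)=\sum_n W_nz^{-n}$, $Y^*(z)=\sum_n Y_n^*z^{n}$, $W^*(z)=\sum_n W_n^*z^{n}$, the coefficient comparison you describe actually yields
\begin{align*}
W_n&=Y_n-Y_{n+2}, \qquad Y^*_n=W^*_n-W^*_{n-2},\\
Y_n&=W_n+W_{n+2}+\cdots, \qquad W^*_n=Y^*_n+Y^*_{n-2}+\cdots,
\end{align*}
where the one-sided sums are the locally finite ones, exactly as your degree argument selects. These are \emph{not} the index shifts printed in the Proposition ($W_n=Y_n-Y_{n-2}$, $Y^*_n=W^*_n-W^*_{n+2}$, etc.). The printed shifts are typos: they contradict the relations the paper itself displays right after the definition of $Y^*(z)$ (namely $Y^*_n=W^*_n-W^*_{n-2}$ and $W^*_n=Y^*_n+Y^*_{n-2}+\cdots$), and they fail on examples, e.g. $W_{-n}.1=o_{(n)}=h_n-h_{n-2}=(Y_{-n}-Y_{-n+2}).1$, whereas $(Y_{-n}-Y_{-n-2}).1=h_n-h_{n+2}$. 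Because your write-up leaves the indices implicit (``the two-term relation expressing $W_n$ through the $Y$'s''), it does not catch this discrepancy; carried out carefully, your method proves the corrected relations above rather than the ones as literally stated, and you should say so.
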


In other words, the operators $W_n$ and $W_n^*$ satisfy the same relations as $Y_n$ and $Y_n^*$
do in Theorem \ref{T:comm1}. The following result explains why we introduce $Y^*(z)$ even though
it is not the dual operator of $Y(z)$.

\begin{theorem} \label{T:dual-bases}
For any partition $\lambda=(\lambda_1, \cdots, \lambda_l)$ and its conjugate partition $\lambda'=(\lambda_1', \cdots, \lambda_k')$
one has that
\begin{align}
Y_{-\lambda_1}\cdots Y_{-\lambda_l}.1&=(-1)^{|\lambda|}Y_{\lambda_1'}^*\cdots Y_{\lambda_k'}^*.1,\\
W_{-\lambda_1}\cdots W_{-\lambda_l}.1&=(-1)^{|\lambda|}W_{\lambda_1'}^*\cdots W_{\lambda_k'}^*.1.
\end{align}
Both are $\mathbb Z$-bases in $\Lambda$.
\end{theorem}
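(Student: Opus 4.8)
The plan is to establish the two displayed identities by induction and then read off the $\mathbb{Z}$-basis statement from a triangularity argument. I would carry out the symplectic family ($Y$, $Y^*$) in full; the orthogonal family ($W$, $W^*$) then follows word for word, since Proposition \ref{P:comm2} asserts that $W_n, W_n^*$ satisfy exactly the generalized Clifford relations of Theorem \ref{T:comm1} and the vacuum behaves identically.

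The computations $Y(z).1=\exp\big(\sum_{n\ge1}\tfrac{p_n}{n}z^n\big)=\sum_{m\ge0}h_mz^m$ and $Y^*(z).1=(1-z^2)\exp\big(-\sum_{n\ge1}\tfrac{p_n}{n}z^n\big)$ give the vacuum data $Y_{-m}.1=h_m$, $Y_n.1=0$ for $n>0$, $Y_0.1=1$ and $Y^*_n.1=0$ for $n<0$, $Y^*_0.1=1$, which I would use repeatedly to truncate words acting on $1$. The induction is on the number of parts $l(\lambda)$: writing $\mu=(\lambda_2,\dots,\lambda_l)$ and applying the hypothesis to $Y_{-\lambda_2}\cdots Y_{-\lambda_l}.1=(-1)^{|\mu|}Y^*_{\mu_1'}\cdots Y^*_{\mu_j'}.1$, the identity reduces to the single Pieri-type statement $Y_{-\lambda_1}Y^*_{\mu_1'}\cdots Y^*_{\mu_j'}.1=(-1)^{\lambda_1}Y^*_{\lambda_1'}\cdots Y^*_{\lambda_k'}.1$. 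I would prove this by commuting $Y_{-\lambda_1}$ rightward through the string of dual operators via the mixed relation $Y_mY_n^*+Y_{n+1}^*Y_{m+1}=\delta_{m,n}$: each step either splits off a Kronecker term or passes through with both indices raised by one, after which the relation $Y^*_mY^*_n+Y^*_{n-1}Y^*_{m+1}=0$ restores the decreasing (partition) order of the resulting $Y^*$-word.

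The main obstacle is precisely this bookkeeping: one must check that the surviving terms reorganize into $Y^*_{\lambda_1'}\cdots Y^*_{\lambda_k'}.1$ — i.e. that commuting $Y_{-\lambda_1}$ through realizes, on the conjugate side, the passage from $\mu$ to $\lambda$ by prepending a row of length $\lambda_1$ — and that the accumulated signs combine to the required $(-1)^{\lambda_1}$. Because the generalized relations differ from the ordinary fermionic ones by the extra factor $(1-wz)$ in (\ref{ope1}), this particle--hole argument must be carried out with the shifted indices rather than quoted. Finally, for the $\mathbb{Z}$-basis claim I would argue triangularity against the Schur functions: the operator product expansion (\ref{ope1}) gives $Y(z_1)\cdots Y(z_l).1=\prod_{1\le i<j\le l}(1-z_iz_j)\,S(z_1)\cdots S(z_l).1$, so extracting the coefficient of $z_1^{\lambda_1}\cdots z_l^{\lambda_l}$ and invoking Proposition \ref{P:schur} yields $Y_{-\lambda}.1=s_\lambda+\sum_{|\mu|<|\lambda|}c_{\lambda\mu}s_\mu$ with $c_{\lambda\mu}\in\mathbb{Z}$, the lower terms coming from the domino factors $(1-z_iz_j)$. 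The leading term $s_\lambda$ together with the strictly lower degree of all corrections makes the transition to the Schur $\mathbb{Z}$-basis unitriangular over $\mathbb{Z}$, so $\{Y_{-\lambda}.1\}$ — hence, by the identity just proved, the dual family — is a $\mathbb{Z}$-basis of $\Lambda$; the orthogonal case uses $W(z_1)\cdots W(z_l).1=\prod_i(1-z_i^2)\prod_{i<j}(1-z_iz_j)\,S(z_1)\cdots S(z_l).1$ in the same way.
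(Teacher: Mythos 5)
Your proposal is correct, but it takes a genuinely different route from the paper's. The paper never touches the Clifford relations here: it conjugates by the dressing operator $\exp\big(\sum_{n\geq 1}\frac{a_{-n}^2-a_{-2n}}{2n}w^n\big)$, whose adjoint converts $Y^*(z)$ (resp.\ $Y(z)$) into $S^*(z)$ (resp.\ $S(z)$) in the limit $w\to 1$, and then compares inner products against the complete family $S_{-\nu_1}\cdots S_{-\nu_m}.1$, so that the identity is inherited wholesale from the known Schur duality $S_{-\lambda_1}\cdots S_{-\lambda_l}.1=(-1)^{|\lambda|}S^*_{\lambda_1'}\cdots S^*_{\lambda_k'}.1$ of Proposition \ref{P:schur}. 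You instead derive the identity intrinsically by induction on $l(\lambda)$ from Theorem \ref{T:comm1} and the vacuum data. For the record, the bookkeeping you flag as the main obstacle does close, and more cleanly than you fear: with $\mu=(\lambda_2,\dots,\lambda_l)$, passing $Y_{-\lambda_1}$ through $Y^*_{\mu_1'}\cdots Y^*_{\mu_{\lambda_2}'}$ via $Y_mY^*_n=\delta_{m,n}-Y^*_{n+1}Y_{m+1}$ produces \emph{no} surviving Kronecker terms (the $Y$-index stays strictly negative while every $Y^*$-index is $\geq 0$), yielding $(-1)^{\lambda_2}Y^*_{\mu_1'+1}\cdots Y^*_{\mu_{\lambda_2}'+1}Y_{-(\lambda_1-\lambda_2)}.1$; since $\lambda'=(\mu_1'+1,\dots,\mu_{\lambda_2}'+1,1^{\lambda_1-\lambda_2})$, the only missing ingredient is the single-row case $Y_{-m}.1=(-1)^m(Y^*_1)^m.1$, which follows by a sub-induction from $Y^*_1Y_{-(m-1)}.1=-Y_{-m}Y^*_0.1=-Y_{-m}.1$; the straightening relation $Y^*_mY^*_n+Y^*_{n-1}Y^*_{m+1}=0$ is never actually needed, as the resulting word is already weakly decreasing. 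As for what each approach buys: the paper's argument treats both families at one stroke, avoids all combinatorics, and generates along the way precisely the matrix-coefficient identities (\ref{e:ip1})--(\ref{e:ip2}) that are reused in the later duality theorem; your argument is more elementary and self-contained, makes transparent that the theorem is a formal consequence of the Clifford relations plus vacuum conditions (which is exactly why it transfers verbatim to $W,W^*$ via Proposition \ref{P:comm2}), and your unitriangularity argument $Y_{-\lambda}.1=s_\lambda+\sum_{|\mu|<|\lambda|}c_{\lambda\mu}s_\mu$ with $c_{\lambda\mu}\in\mathbb Z$ makes the $\mathbb Z$-basis claim considerably more explicit than the paper's brief assertion that it is clear.
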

\begin{proof} The two identities are proved exactly the same, so we only treat the first one. First of all, it is clear that
$\{Y_{-\lambda_1}\cdots Y_{-\lambda_l}.1\}$ and $\{Y^*_{\lambda_1'}\cdots Y^*_{\lambda_k'}.1\}$ are bases of $\Lambda$.
We now show that the respective inner products of $Y(z)'s$ and $Y^*(z)'s$ with the Schur basis constructed in
Proposition \ref{P:schur} are equal. To simplify computation we use the exponential
operator $exp(\sum_{n=1}^{\infty}\frac{a_{-n}^2-a_{-2n}}{2n}w^n)$ to relate
$Y(z)$ and $S(z)$. In fact, for any two vectors $u, v\in\Lambda$ we have
\begin{align*}
&<Y^*(z)u, exp(\sum_{n=1}^{\infty}\frac{a_{-n}^2-a_{-2n}}{2n}w^n)v>\\
&=<exp(\sum_{n=1}^{\infty}\frac{a_{n}^2-a_{2n}}{2n}w^n)Y^*(z)u, v>\\
&=<(1-wz^2)^{-1}Y^*(z)exp(-\sum_{n=1}^{\infty}\frac{a_n}n(zw)^n)exp(\sum_{n=1}^{\infty}\frac{a_{n}^2-a_{2n}}{2n}w^n)u, v>
\end{align*}
Notice that $\displaystyle \lim_{w\to 1}(1-wz^2)^{-1}Y^*(z)exp(-\sum_{n=1}^{\infty}\frac{a_n}n(zw)^n)=S^*(z)$.
Subsequently one has for
any $k, l$
\begin{align}\label{e:ip1}
&<Y^*(z_1)\cdots Y^*(z_l).1, exp(\sum_{n=1}^{\infty}\frac{a_{-n}^2-a_{-2n}}{2n})S(w_1)\cdots S(w_k).1>\\ \nonumber
&=<S^*(z_1)\cdots S^*(z_l).1, S(w_1)\cdots S(w_k).1>
\end{align}
Similar computation also gives that
\begin{align}\label{e:ip2}
&<Y(z_1)\cdots Y(z_l).1, exp(\sum_{n=1}^{\infty}\frac{a_{-n}^2-a_{-2n}}{2n})S(w_1)\cdots S(w_k).1>\\ \nonumber
&=<S(z_1)\cdots S(z_l).1, S(w_1)\cdots S(w_k).1>
\end{align}
By comparing coefficients of $z^{\lambda}w^{\nu}$ in Eqs. (\ref{e:ip1}-\ref{e:ip2}) and
using Eq. (\ref{e:Schur-ip}) it follows that for any two partitions $\lambda$ and $\nu$
\begin{align*}
&<Y_{-\lambda_1}\cdots Y_{-\lambda_l}.1, S_{-\nu_1}\cdots S_{-\nu_m}.1>\\
&=(-1)^{|\lambda|}<Y^*_{\lambda_1'}\cdots Y^*_{\lambda_k'}.1, S_{-\nu_1}\cdots S_{-\nu_m}.1>,
\end{align*}
Therefore $Y_{-\lambda_1}\cdots Y_{-\lambda_l}.1=(-1)^{|\lambda|}Y^*_{\lambda_1'}\cdots Y^*_{\lambda_k'}.1$. The proof also shows that the vectors
$\{Y_{-\lambda_1}\cdots Y_{-\lambda_l}.1\}$ form a $\mathbb Z$-basis of $\Lambda$.
\end{proof}

\section{Determinant formulae and duality}

For any partition $\lambda=(\lambda_1,\lambda_2,\cdots,\lambda_k)$
we define the symplectic and orthogonal Schur functions (cf. \cite{KT}) respectively as follows.
\begin{align}\label{e:symp}
 sp_\lambda &=\frac12 det(h_{\lambda_i-i+j}+h_{\lambda_i-i-j+2})_{1\leq i,j\leq k}, \\
 o_{\lambda}&=det(h_{\lambda_i-i+j}-h_{\lambda_i-i-j})_{1\leq i,j\leq k}. \label{e:ortho}
\end{align}
 where $h_m$ is the $m$th complete symmetric function in $\Lambda$. Note that the factor
 $\frac 12$ does not affect that both elements are in $\Lambda_{\mathbb Z}$.
 In fact,
 \begin{equation}
 sp_{\lambda}=det\begin{bmatrix} h_{\lambda_1} & h_{\lambda_1+1}+h_{\lambda_1-1} &
 \cdots & h_{\lambda_k+k-1}+h_{\lambda_1-k+1}\\
 h_{\lambda_2-1} & h_{\lambda_2}+h_{\lambda_2-2} &  \cdots & h_{\lambda_2+k-2}+h_{\lambda_2-k}\\
 \vdots & \vdots & \ddots & \vdots\\
 h_{\lambda_k-k+1} & h_{\lambda_k-k+2}+h_{\lambda_k-k} &\cdots & h_{\lambda_k}+h_{\lambda_k-2k+2}
 \end{bmatrix}.
 \end{equation}

The orthogonal/symplectic Schur functions come from Weyl characters for classical Lie groups. In fact
$sp_\lambda(x_1, x_1^{-1}, \cdots, x_n, x_n^{-1})=\frac12det(h_{\lambda_i-i+j}-h_{\lambda_i-i-j+2})$ is the character of the irreducible
$Sp_{2n}$-module with weight $\lambda$. Here the complete
homogeneous function $h_m$ is defined by
$\prod\limits_{i=1}^n\frac1{(1+x_iz)(1+x^{-1}_iz)}=\sum\limits_{m=0}^{\infty} h_mz^m.$
In particular if $V$ is the defining module of $Sp_{2n}$, then $S^n(V)$ is irreducible, but $\Lambda^n(V)+\Lambda^{n-2}(V)+\cdots$ is the fundamental irreducible representation.

Similarly for $SO(2n+1)$ the character
$ch(V(\lambda)$ is also given by the orthogonal Schur function, and the irreducible
character of $SO_{2n}$-module $Res_{SO(2n)}^{O(2n)}V(\lambda)$ associated with
the highest weight $\lambda=\lambda_1\epsilon_1+\cdots +\lambda_n\epsilon_n$ is
given by $
o_{\lambda}(t_1, t_1^{-1}\cdots, t_n, t_n^{-1})=\chi_{\lambda}+\chi_{\sigma(\lambda)}.
$

In both cases the Schur orthogonal symmetric function is defined by
$o_{\lambda}(x_1 \cdots, x_n)=det(e_{\lambda_i-i-j}-e_{\lambda_i-i+j})$ where the elementary symmetric functions $e_m$ are given by $
(1+z)\prod\limits_{i=1}^n{(1+x_iz)(1+x^{-1}_iz)}=\sum\limits_{m=0}^{\infty} e_mz^m$
or $\prod\limits_{i=1}^n{(1+x_iz)(1+x^{-1}_iz)}=\sum\limits_{m=0}^{\infty} e_mz^m,
$
respectively for type $B$ and $D$.

Before giving vertex operator realization of $sp_{\lambda}$ and $o_{\lambda}$,
we need several Vandermonde type identities.

\begin{lemma} \label{L:van}
For any positive integer $k$, the following Vandermonde-like identities hold.
\begin{align}\label{e:Van1}
det&(z_i^{k-j}+z_i^{k+j-2})=2\prod_{1\leq i<j\leq k}(z_i-z_j)(1-z_iz_j)\\ \nonumber
&=\sum_{\sigma\in S_k, \epsilon_i=\pm 1}sgn(\sigma)(z_1\cdots z_k)^{k-1}z_1^{\epsilon_1(\sigma(1)-1)}\cdots z_k^{\epsilon_k(\sigma(k)-1)}\\ \label{e:Van2}
det&(z_i^{k-j}-z_i^{k+j})=\prod_{1\leq i<j\leq k}(z_i-z_j)\prod_{1\leq i\leq j\leq k}(1-z_iz_j)\\ \nonumber
&=\sum_{\sigma\in S_k, \epsilon_i=\pm 1} sgn(\sigma)\epsilon_1\cdots \epsilon_k z_1^{k-\epsilon_1\sigma(1)}\cdots z_k^{k-\epsilon_k\sigma(k)}\\ \label{e:Van3}
det&(z_i^{j-1}-z_i^{2k-j+1})=\prod_{1\leq i<j\leq k}(z_j-z_i)\prod_{1\leq i\leq j\leq k}(1-z_iz_j)\\ \nonumber
&=\sum_{\sigma\in S_k, \epsilon_i=\pm 1} sgn(\sigma)\epsilon_1\cdots \epsilon_k
(z_1\cdots z_k)^kz_1^{\epsilon_1(-k+\sigma(1)-1)}\cdots z_k^{\epsilon_k(-k+\sigma(k)-1)}.
\end{align}
\end{lemma}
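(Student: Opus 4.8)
The plan is to establish all three evaluations by one common device. In each determinant I would first extract a suitable power of $z_i$ from the $i$th row so that the entries become \emph{symmetric} Laurent polynomials in $z_i$, and then pass to the variable $w_i=z_i+z_i^{-1}$. For (\ref{e:Van1}) the entry is $z_i^{k-j}+z_i^{k+j-2}=z_i^{k-1}\bigl(z_i^{j-1}+z_i^{-(j-1)}\bigr)$, so $\prod_i z_i^{k-1}$ comes out and the reduced $(i,j)$ entry is $z_i^{j-1}+z_i^{-(j-1)}$; for (\ref{e:Van2}) the entry is $z_i^{k}\bigl(z_i^{-j}-z_i^{j}\bigr)$, so $\prod_i z_i^{k}$ comes out; and (\ref{e:Van3}) is nothing but (\ref{e:Van2}) with its columns reversed, since $z_i^{j-1}-z_i^{2k-j+1}$ is exactly the $(k+1-j)$th entry of the row in (\ref{e:Van2}), a reversal that costs only the sign $(-1)^{k(k-1)/2}$ and turns $\prod_{i<j}(z_i-z_j)$ into $\prod_{i<j}(z_j-z_i)$.

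The key point is that every reduced entry is a genuine polynomial in $w_i$. From the recursion $z^{m+1}+z^{-m-1}=w\,(z^m+z^{-m})-(z^{m-1}+z^{-m+1})$ one sees that $z^{j-1}+z^{-(j-1)}$ is a monic polynomial of degree $j-1$ in $w$ for $j\geq2$ and equals $2$ for $j=1$, while $z^{j}-z^{-j}=(z-z^{-1})\,U_{j-1}(w)$ with $U_{j-1}$ monic of degree $j-1$ (these are the Chebyshev polynomials). A determinant of polynomials $p_{j-1}(w_i)$, with $p_{j-1}$ of degree $j-1$ and leading coefficient $c_j$, collapses by column operations to $\bigl(\prod_j c_j\bigr)\det\bigl(w_i^{j-1}\bigr)=\bigl(\prod_j c_j\bigr)\prod_{i<j}(w_j-w_i)$. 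This yields the factor $2$ in (\ref{e:Van1}) (from $c_1=2$, all other $c_j=1$) and, in (\ref{e:Van2}), the additional row factors $\prod_i\bigl(-(z_i-z_i^{-1})\bigr)$. I would then substitute $w_j-w_i=(z_j-z_i)(z_iz_j-1)/(z_iz_j)$ and reassemble: the denominator $\prod_{i<j}(z_iz_j)=\prod_i z_i^{k-1}$ cancels the extracted $\prod_i z_i^{k-1}$ in (\ref{e:Van1}), giving $2\prod_{i<j}(z_i-z_j)(1-z_iz_j)$; in (\ref{e:Van2}) the same denominator combines with $\prod_i z_i^{k}$ and the row factors so that $\prod_i\bigl[z_i\cdot(-(z_i-z_i^{-1}))\bigr]=\prod_i(1-z_i^2)$ supplies exactly the diagonal terms completing $\prod_{i<j}(1-z_iz_j)$ to $\prod_{i\leq j}(1-z_iz_j)$.

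The second equality in each line, the expansion over $S_k\times\{\pm1\}^k$, follows at once from the Leibniz formula $\det M=\sum_\sigma\mathrm{sgn}(\sigma)\prod_i M_{i\sigma(i)}$: splitting each two-term entry $M_{i\sigma(i)}$ into its monomials indexed by $\epsilon_i=\pm1$ (with the sign $\epsilon_i$ recording the minus sign in the difference cases) and factoring out the common $(z_1\cdots z_k)^{k-1}$ or $(z_1\cdots z_k)^{k}$ reproduces the stated sums verbatim; this handles (\ref{e:Van3}) directly as well.

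The main obstacle is purely the bookkeeping in the reassembly step: one must keep simultaneous track of the extracted row powers, the denominator $\prod_{i<j}(z_iz_j)$ produced by the $w_j-w_i$ substitution, and the two independent sign sources — the factor $(-1)^k$ inside $\prod_i(-(z_i-z_i^{-1}))$ and the column-reversal sign $(-1)^{k(k-1)/2}$ for (\ref{e:Van3}) — and check that these conspire to give precisely the claimed $\prod_{i<j}(z_i-z_j)$ or $\prod_{i<j}(z_j-z_i)$ together with the diagonal factors $(1-z_i^2)$. To guard against sign errors I would first verify the normalization in the cases $k=1,2$ and only then run the general argument.
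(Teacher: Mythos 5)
Your proof is correct, but it follows a genuinely different route from the paper's. The paper derives all three identities from Weyl's denominator formulae for types $D$ and $C$ (quoted from \cite{W}): the signed sums over $S_k\times\{\pm1\}^k$ \emph{are} the denominator formulae, and the determinants are then matched to them by extracting row powers and performing column manipulations, with (\ref{e:Van3}) obtained from (\ref{e:Van2}) by reversing columns, exactly as you do. Your argument instead reduces everything to the ordinary Vandermonde determinant: after extracting the row powers $z_i^{k-1}$ or $z_i^{k}$ you pass to $w_i=z_i+z_i^{-1}$, use the monic-triangular (Chebyshev) structure of $z^{j-1}+z^{-(j-1)}$ and $(z^{j}-z^{-j})/(z-z^{-1})$ as polynomials in $w$ to collapse the determinant to a constant times $\prod_{i<j}(w_j-w_i)$, and then reassemble via $w_j-w_i=(z_j-z_i)(z_iz_j-1)/(z_iz_j)$; the signed-sum expressions you obtain directly from the Leibniz expansion rather than from Weyl's formulae. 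The bookkeeping you flag does close correctly: in (\ref{e:Van1}) the denominator $\prod_{i<j}(z_iz_j)=\prod_i z_i^{k-1}$ cancels the extracted powers, and in (\ref{e:Van2}) the combination $\prod_i\bigl(-z_i(z_i-z_i^{-1})\bigr)=\prod_i(1-z_i^2)$ supplies exactly the diagonal factors needed to pass from $\prod_{i<j}(1-z_iz_j)$ to $\prod_{i\leq j}(1-z_iz_j)$. Notably, your route is precisely the alternative the paper itself mentions in the remark following the lemma, namely a proof ``exclusively based on the Vandermonde identities''; its benefit is that the subsequent derivation of the Weyl determinant formulae becomes logically independent of Weyl's denominator formulae, whereas the paper's version is shorter but imports the classical type-$C$ and type-$D$ identities as a black box.
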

\begin{proof} These formulae are special cases of Weyl's denominator formulae
\cite{W}. We include a proof for completeness.
The Weyl denominator formula of type $D$ \cite{W} says that (in reversing order of columns):
\begin{equation}\label{e:denom-B}
\sum_{\sigma\in S_k, \epsilon_i=\pm 1}sgn(\sigma)z_1^{\epsilon_1(\sigma(1)-1)}\cdots z_k^{\epsilon_k(\sigma(k)-1)}
=2(z_1\cdots z_k)^{1-k}\prod_{1\leq i<j\leq k}(z_i-z_j)(1-z_iz_j).
\end{equation}

By the anti-symmetry of the summation side, one sees that
the left-hand side is equal to $det(z_i^{j-1}+z_i^{-j+1})$. Note that
\begin{align*}
det&(z_i^{j-1}+z_i^{-j+1})=2|1, z+z^{-1}, \cdots, z^{k-1}+z^{1-k}|\\
&=2z_2^{-1}z_3^{-2}\cdots z_k^{1-k}|1, 1+z^2, 1+z^4, \cdots, 1+z^{2k-2}|\\
&=2(z_1\cdots z_k)^{1-k}|z^{k-1}, z^{k-2}+z^{k}, \cdots, 1+z^{2k-2}|\\
&=(z_1\cdots z_k)^{1-k}det(z_i^{k-j}+z_i^{k+j-2}),
\end{align*}
where we have displayed a typical row in the determinant computation.

Similarly one form of Weyl denominator formula of type $C$ \cite{W} says that
\begin{align*}
det&(z_i^{-j}-z_i^{j})=(z_1\cdots z_k)^{-k}\prod_{1\leq i<j\leq k}(z_i-z_j)\prod_{1\leq i\leq j\leq k}(1-z_iz_j)\\
&=\sum_{\sigma\in S_k, \epsilon_i=\pm 1} sgn(\sigma)\epsilon_1\cdots \epsilon_k z_1^{-\epsilon_1\sigma(1)}\cdots z_k^{-\epsilon_k\sigma(k)}.
\end{align*}
The left-hand side can be easily changed to our current form:
\begin{align*}
det&(z_i^{-j}-z_i^{j})=|z^{-1}-z, z^{-2}-z^2, \cdots, z^{-k}-z^{k}|\\
&=(z_1\cdots z_k)^{-k}|z^{k-1}-z^{k+1}, z^{k-2}-z^{k+2}, \cdots, 1-z^{2k}|\\
&=det(z_i^{k-j}-z^{k+j}_i).
\end{align*}
Finally the last identity is obtained from Eq. (\ref{e:Van2}) by reversing the order of columns.
\end{proof}

We remark that one can also prove Lemma \ref{L:van} exclusively based on the
Vandermonde identities, which will make our later derivation of Weyl formulae independent from
Weyl denominator formulae.
Now we are ready for vertex operator realization of symplectic Schur functions.

\begin{theorem} \label{T:symp} For any partition $\lambda=(\lambda_1, \cdots, \lambda_k)$ we have
\begin{equation}\label{e:det-symp1}
Y_{-\lambda_1}Y_{-\lambda_2}\cdots Y_{-\lambda_k}.1=sp_{\lambda}=
\frac 12det(h_{\lambda_i-i+j}+h_{\lambda_i-i-j+2}).
\end{equation}
\end{theorem}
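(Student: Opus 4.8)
The plan is to compute the generating series $Y(z_1)\cdots Y(z_k).1$ in closed form and then read off the coefficient of $z_1^{\lambda_1}\cdots z_k^{\lambda_k}$, which by the mode convention $Y(z)=\sum_n Y_nz^{-n}$ is exactly $Y_{-\lambda_1}\cdots Y_{-\lambda_k}.1$. This follows Bernstein's derivation of the Jacobi--Trudi form of $s_\lambda$ from $S(z)$; the only new ingredients are the extra contraction factor $(1-z_iz_j)$ in (\ref{ope1}) and the matching Vandermonde identity (\ref{e:Van1}). I would begin by recording the vacuum action: since $\partial/\partial p_n$ kills $1$, the annihilation exponential in $Y(z)$ acts as the identity on $1$, so by (\ref{e:homogeneoussymm})
\[
Y(z).1=\exp\Big(\sum_{n\ge1}\tfrac{p_n}{n}z^n\Big)=\sum_{m\ge0}h_mz^m=:H(z),
\]
and in particular $Y_{-m}.1=h_m$.

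Next I would iterate (\ref{ope1}) using the Wick rule for exponentials of Heisenberg generators (the contractions are $c$-numbers), which yields the exact identity
\[
Y(z_1)\cdots Y(z_k)=\;:Y(z_1)\cdots Y(z_k):\!\!\prod_{1\le i<j\le k}\!\!(1-z_jz_i^{-1})(1-z_iz_j),
\]
each pairwise contraction reproducing (\ref{ope1}) with $z=z_i,\,w=z_j$. As every contraction factor is a Laurent polynomial there is no expansion ambiguity. Applying this to $1$, and using that the normal-ordered product places all creation operators to the left, gives
\[
Y(z_1)\cdots Y(z_k).1=\prod_{i=1}^kH(z_i)\prod_{1\le i<j\le k}(1-z_jz_i^{-1})(1-z_iz_j).
\]

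I would then convert the contraction product into a determinant through Lemma \ref{L:van}. Pulling out $\prod_{i<j}z_i^{-1}=\prod_iz_i^{i-k}$ turns $\prod_{i<j}(1-z_jz_i^{-1})$ into $\prod_iz_i^{i-k}\prod_{i<j}(z_i-z_j)$, so by (\ref{e:Van1})
\[
\prod_{1\le i<j\le k}(1-z_jz_i^{-1})(1-z_iz_j)=\tfrac12\Big(\prod_{i=1}^kz_i^{i-k}\Big)\det\big(z_i^{k-j}+z_i^{k+j-2}\big).
\]
Substituting and expanding the determinant over $S_k$, the statement reduces to a coefficient extraction: for each row $i$ and each choice $j=\sigma(i)$, the powers of $z_i$ contributed by $H(z_i)$, by $z_i^{i-k}$, and by the two monomials $z_i^{k-j}$, $z_i^{k+j-2}$ must sum to $z_i^{\lambda_i}$. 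The first case forces $h_{\lambda_i-i+j}$ and the second $h_{\lambda_i-i-j+2}$, so the row-$i$, column-$j$ bracket is exactly $h_{\lambda_i-i+j}+h_{\lambda_i-i-j+2}$. Summing over $\sigma$ with signs gives $\tfrac12\det(h_{\lambda_i-i+j}+h_{\lambda_i-i-j+2})=sp_\lambda$.

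I expect the only real obstacle to be bookkeeping: obtaining the \emph{position-dependent} shift $z_i^{i-k}$ rather than a uniform one, since this shift is precisely what produces the $-i$ in the subscripts of the target determinant, and keeping the convention $Y_{-\lambda_i}=[z_i^{\lambda_i}]$ consistent throughout. Convergence causes no trouble: after invoking (\ref{e:Van1}) the whole expression is a product of the $H(z_i)$ (nonnegative powers only) with Laurent polynomials, so the coefficient of $z_1^{\lambda_1}\cdots z_k^{\lambda_k}$ is a finite sum, with $h_m=0$ for $m<0$ automatically handling the vanishing terms.
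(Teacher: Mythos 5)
Your proposal is correct and follows essentially the same route as the paper: both start from $Y(z).1=H(z)$, use Wick's theorem to write $Y(z_1)\cdots Y(z_k).1$ as $\prod_i H(z_i)$ times the pairwise contraction factors, invoke the Vandermonde-type identity (\ref{e:Van1}), and extract the coefficient of $z^{\lambda}$; the only cosmetic difference is that you extract coefficients formally (justified since the contraction factors are Laurent polynomials), whereas the paper phrases the same extraction as iterated contour integrals over concentric circles.
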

\begin{proof} First of all, we recall Eq. (\ref{e:homogeneoussymm})
\begin{equation*}
H(z)=exp\big(\sum\limits_{n=1}^{\infty} a_{-n}\frac{z^n}{n}\big)=\sum\limits_{n=0}^{\infty}
h_nz^n.   
\end{equation*}
where $h_n$ is the complete symmetric function.

By the standard normal order product and Wick's theorem \cite{FLM} it follows that
\begin{equation}
Y(z_1)Y(z_2)\cdots Y(z_k).1=\prod_{i<j}(1-\frac{z_j}{z_i})(1-z_iz_j) H(z_1)\cdots H(z_k),
\end{equation}
where $1>|z_1|>\cdots >|z_k|$.

Let $C_1$, $C_2, \cdots, C_k$ be concentric circles with decreasing radii $<1$, and take $z_1, \cdots, z_k$ as complex numbers. Then
\begin{align*}
Y_{-\lambda_1}&Y_{-\lambda_2}\cdots Y_{-\lambda_k}.1\\
=&\frac{1}{(2\pi i)^k}\int_{C_1}\cdots \int_{C_k}\prod\limits_{1\leq i<j\leq
k}(1-\frac{z_j}{z_i})(1-z_iz_j)H(z_1)\cdots H(z_k)\frac{dz}{z^{\lambda+\iota}},
\end{align*}
where we denote $z^{\lambda}=z_1^{\lambda_1}\cdots z_k^{\lambda_k}$ for any composition
$\lambda=(\lambda_1, \cdots, \lambda_k)$, $\iota=(1, \cdots, 1)$ is of length $k$, and $dz=dz_1\cdots z_k$.

Recall the Vandermonde type identity (\ref{e:Van1}):
\begin{align*}
\frac{1}{2}det(z^{k-j}_i+z^{k+j-2}_i)=&\prod\limits_{1\leq i< j\leq k}(1-z_iz_j)
\prod\limits_{1\leq i<j\leq k}(z_i-z_j)\\
=&(z_1\cdots z_k)^{k-1}\frac 12\sum\limits_{\sigma\in S_k,\epsilon_i=\pm 1}
sgn(\sigma)z^{\epsilon_1(\sigma(1)-1)}_1\cdots z_k^{
\epsilon_k(\sigma(k)-1)}.
\end{align*}

We obtain that $Y_{-\lambda_1}Y_{-\lambda_2}\cdots Y_{-\lambda_k}.1$ is equal to
\begin{align*}
&\frac{1}{(2\pi i)^k}\int_{C_1\cdots C_k}\prod_{i=1}^k exp\big(\sum\limits_{n=1}^{\infty}
\frac{a_{-n}}{n}z_i^n\big)\prod\limits_{1\leq i<j\leq k}(1-\frac{z_j}{z_i})(1-z_iz_j)
\frac{dz}{z^{\lambda+\iota}}\\
=&\sum_{n_1\geq 0\cdots n_k\geq 0}\frac{1}{(2\pi i)^k}\int_{C_1\cdots C_k}h_{n_1}\cdots h_{n_k}z_1^{n_1}\cdots z_k^{n_k}\prod\limits_{1\leq i<j\leq k}(z_i-z_j)(1-z_iz_j)\frac{dz_k\cdots dz_1}{z_1^{\lambda_1+k}\cdots z_k^{\lambda_k+1}}\\
=&\sum_{n_1\geq 0\cdots n_k\geq 0}\frac{1}{(2\pi i)^k}\int_{C_1\cdots C_k}h_{n_1}\cdots h_{n_k}\frac{dz_k\cdots dz_1}{z_k\cdots z_1}\\
&\quad \frac12\sum\limits_{\sigma\in S_k,\epsilon_1,\cdots,\epsilon_k=\pm 1}
sgn(\sigma)z^{\epsilon_1(\sigma(1)-1)-\lambda_1+n_1}_1z^{\epsilon_2(\sigma(2)-1)-\lambda_2+1+n_2}_2\cdots z^{
\epsilon_k(\sigma(k)-1)-\lambda_k+k-1+n_k}_k\\
=&\frac12\sum\limits_{\sigma\in S_k,\epsilon_i=\pm 1}\epsilon_1\cdots\epsilon_ksgn(\sigma)h_{\lambda_1+\epsilon_1(\sigma(1)-1)}
h_{\lambda_2-1+\epsilon_2(\sigma(2)-1)}\cdots h_{\lambda_k-k+1+\epsilon_k(\sigma(k)-1)}\\
=&\frac 12det(h_{\lambda_i-i+j}+h_{\lambda_i-i-j+2})
\end{align*}
\end{proof}

Similarly for the vertex operator $Y^*(z)$ we have the following determinant expression.
We also remark that the formulae involving Frobenius notation seem to be new, see
Eqs. (\ref{e:frobenius3}), (\ref{e:frobenius4}), (\ref{e:frobenius5}) and (\ref{e:frobenius6}).

\begin{theorem} \label{T:symp2} For partition $\lambda=(\lambda_1, \cdots, \lambda_k)$ we have
\begin{equation}\label{e:det-symp2}
Y_{\lambda_1}^*Y_{\lambda_2}^*\cdots Y_{\lambda_k}^*.1= (-1)^{|\lambda |}det(e_{\lambda_i-i+j}-e_{\lambda_i-i-j}).
\end{equation}
Therefore for any partition $\lambda$ and its conjugate $\lambda'$ one has that
\begin{equation}\label{e:det-symp3}
sp_{\lambda}=\frac12det(h_{\lambda_i-i+j}+h_{\lambda_i-i-j+2})=det(e_{\lambda_i'-i+j}-e_{\lambda_i'-i-j}).
\end{equation}
Moreover one has for any partition $(\al|\beta)$ in Frobenius notation
\begin{align}\label{e:frobenius3}
sp_{(\al|\beta)}&=(-1)^{|\beta|+r(r-1)/2}Y_{-\al_1-1}\cdots Y_{-\al_r-r}Y_{\beta_1-(r-1)}^*Y_{\beta_2-(r-2)}^*\cdots Y_{\beta_r}^*.1,\\ \label{e:frobenius4}
sp_{(\al|\beta)}&=(-1)^{|\beta|+r}Y_{\beta_1-1}^*\cdots Y_{\beta_r-r}^*Y_{-\al_1+(r-1)}\cdots Y_{-\al_r}.1.
\end{align}
\end{theorem}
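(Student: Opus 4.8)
The plan is to establish the four assertions in sequence, with only (\ref{e:det-symp2}) requiring genuinely new work. I would prove (\ref{e:det-symp2}) by repeating the contour-integral computation of Theorem~\ref{T:symp} with $Y^*(z)$ in place of $Y(z)$; deduce (\ref{e:det-symp3}) from (\ref{e:det-symp2}) together with the duality of Theorem~\ref{T:dual-bases}; and obtain the Frobenius formulae (\ref{e:frobenius3})--(\ref{e:frobenius4}) from the observation that $Y,Y^*$ satisfy exactly the Clifford relations and vacuum conditions of the Bernstein operators $S,S^*$, so that the rearrangement proving (\ref{e:frobenius1}) in Proposition~\ref{P:schur} applies unchanged.

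For (\ref{e:det-symp2}) I would first note $Y^*(z).1=(1-z^2)E^*(z)$ where, by (\ref{e:elementarysymm}), $E^*(z)=\exp(-\sum_{n\ge1}\frac{a_{-n}}{n}z^n)=\sum_{m\ge0}(-1)^m e_m z^m$. Applying Wick's theorem to $Y^*(z_1)\cdots Y^*(z_k).1$ contributes one contraction $(1-z_j/z_i)(1-z_iz_j)$ per pair $i<j$ (from the OPE $Y^*(z)Y^*(w)=:W^*(z)W^*(w):(1-z^2)(1-w^2)(1-wz^{-1})(1-wz)$) together with the diagonal factors $\prod_i(1-z_i^2)$. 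The crucial simplification is $\prod_i(1-z_i^2)\prod_{i<j}(1-z_iz_j)=\prod_{i\le j}(1-z_iz_j)$, so the total prefactor is $(\prod_i z_i^{-(k-i)})\prod_{i<j}(z_i-z_j)\prod_{i\le j}(1-z_iz_j)$, which is precisely the Vandermonde-type expression of (\ref{e:Van2}).

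I would then read off $Y^*_{\lambda_1}\cdots Y^*_{\lambda_k}.1$ as the coefficient of $z_1^{\lambda_1}\cdots z_k^{\lambda_k}$, i.e. integrate the previous expression against $\prod_i z_i^{-\lambda_i-1}dz_i/(2\pi i)$ over nested circles $1>|z_1|>\cdots>|z_k|$, substitute the summation form of (\ref{e:Van2}), and take residues: for each $\sigma\in S_k$ and signs $\epsilon_i=\pm1$ the residue fixes the index $m_i=\lambda_i-i+\epsilon_i\sigma(i)$ of $e_{m_i}$, and summing over $\epsilon_i$ yields $(-1)^{\lambda_i-i}(-1)^{\sigma(i)}(e_{\lambda_i-i+\sigma(i)}-e_{\lambda_i-i-\sigma(i)})$. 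Collecting $\prod_i(-1)^{\lambda_i-i}(-1)^{\sigma(i)}=(-1)^{|\lambda|}$ (using $\sum_i\sigma(i)=\sum_i i$) pulls the global sign out, leaving $\sum_\sigma sgn(\sigma)\prod_i(e_{\lambda_i-i+\sigma(i)}-e_{\lambda_i-i-\sigma(i)})=\det(e_{\lambda_i-i+j}-e_{\lambda_i-i-j})$, which is (\ref{e:det-symp2}).

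Then (\ref{e:det-symp3}) follows at once: by Theorems~\ref{T:symp} and~\ref{T:dual-bases}, $sp_\lambda=Y_{-\lambda_1}\cdots Y_{-\lambda_l}.1=(-1)^{|\lambda|}Y^*_{\lambda_1'}\cdots Y^*_{\lambda_k'}.1$, and applying (\ref{e:det-symp2}) to $\lambda'$ cancels this sign against $(-1)^{|\lambda'|}=(-1)^{|\lambda|}$, giving $sp_\lambda=\det(e_{\lambda_i'-i+j}-e_{\lambda_i'-i-j})$. For the Frobenius identities the key point is that the manipulation turning $S_{-\lambda}.1$ into its mixed form in Proposition~\ref{P:schur} uses only the Clifford relations and the vacuum conditions $S_0.1=S^*_0.1=1$, $S_n.1=0\,(n>0)$, $S^*_n.1=0\,(n<0)$; by Theorem~\ref{T:comm1} the pair $Y,Y^*$ obeys the identical relations and identical vacuum conditions, so the same word in the generators reduces to $Y_{-\lambda}.1=sp_\lambda$ with the same scalar $(-1)^{|\beta|+r(r-1)/2}$ (resp. $(-1)^{|\beta|+r}$), yielding (\ref{e:frobenius3})--(\ref{e:frobenius4}). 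I expect the main obstacle to be precisely this last claim: verifying that the reduction is purely formal and nowhere secretly invokes the $S$-specific product $S(z)S^*(w)=:S(z)S^*(w):(1-wz^{-1})^{-1}$, whose $Y$-counterpart (\ref{ope2}) carries the extra factor $\tfrac{1-w^2}{(1-wz^{-1})(1-wz)}$ and would not transfer term-by-term; the sign bookkeeping in (\ref{e:det-symp2}) is the one genuinely delicate computation.
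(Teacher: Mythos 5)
Your treatment of \eqref{e:det-symp2} and \eqref{e:det-symp3} is correct and is essentially the paper's own proof: the same Wick/contour-integral computation, with the prefactor $\prod_i(1-z_i^2)\prod_{i<j}(1-z_j/z_i)(1-z_iz_j)$ recognized as the Vandermonde-type expression \eqref{e:Van2}, the same residue condition $m_i=\lambda_i-i+\epsilon_i\sigma(i)$ and sign bookkeeping producing $(-1)^{|\lambda|}$, and the same appeal to Theorem \ref{T:dual-bases} to pass from \eqref{e:det-symp2} to \eqref{e:det-symp3}.

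The gap is exactly where you predicted it: the Frobenius formulae. Your argument is conditional on the claim that the proof of \eqref{e:frobenius1} in \cite{J3} uses nothing beyond the mode relations and the vacuum conditions, i.e.\ never secretly uses the operator products themselves. That is a claim about the internal structure of a proof you have not inspected, and you leave it unverified; moreover the paper never even states the Clifford relations for $S_n, S_n^*$, only the OPEs of Proposition \ref{P:schur}. The transfer principle can be made unconditional---the relations of Theorem \ref{T:comm1} together with the vacuum conditions determine, by straightening, the action of every mode on the basis $\{Y_{-\mu}.1\}$, so the linear map $S_{-\mu}.1\mapsto Y_{-\mu}.1$ intertwines the two systems and every vacuum-vector identity transfers---but that intertwining argument must actually be supplied; citing Theorem \ref{T:comm1} alone does not discharge it. The paper avoids the issue entirely, using only the tools you yourself already invoked for \eqref{e:det-symp3}: by Theorem \ref{T:dual-bases}, $Y^*_{\beta_1-(r-1)}\cdots Y^*_{\beta_r}.1=(-1)^{|\mu|}Y_{-\mu_1}\cdots Y_{-\mu_m}.1$, where $\mu$ is the conjugate of the partition $(\beta_1-(r-1),\ldots,\beta_r)$; the combinatorics of Frobenius notation gives $\lambda=(\al_1+1,\ldots,\al_r+r,\mu_1,\ldots,\mu_m)$, so Theorem \ref{T:symp} turns the right-hand side of \eqref{e:frobenius3} into $(-1)^{|\beta|+r(r-1)/2+|\mu|}sp_\lambda=sp_\lambda$, because $|\mu|=|\beta|-r(r-1)/2$. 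This two-line unconditional reduction is what should replace your transfer argument.

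One further caution concerning \eqref{e:frobenius4}: as literally printed (and likewise its model, the last formula of Proposition \ref{P:schur}), it already fails for $\lambda=(1)$, where $r=1$ and $\al_1=\beta_1=0$: the right-hand side is $-Y^*_{-1}Y_0.1=-Y^*_{-1}.1=0$, not $sp_{(1)}=h_1$, since negative modes of $Y^*$ annihilate the vacuum. So there is an index misprint to repair before either your transfer argument or the paper's ``proved similarly'' can establish it; a point in favor of your approach is that, once completed, it would automatically reproduce whichever corrected form of the $S$-identity actually holds.
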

\begin{proof} Let $C_1$, $C_2, \cdots, C_k$ be as above, then invoking (\ref{e:Van2}) we get that
\begin{align*}
&Y^*_{\lambda_1}Y^*_{\lambda_2}\cdots Y^*_{\lambda_k}.1\\
=&\frac{1}{(2\pi i)^k}\int_{C_k\cdots C_1}\prod_{i=1}^k exp\big(-\sum\limits_{n=1}^{\infty}
\frac{a_{-n}}{n}z_i^n\big)\prod\limits_{1\leq i<j\leq k}(1-\frac{z_j}{z_i})\prod\limits_{1\leq i\leq j\leq k}(1-z_iz_j)
\frac{dz}{z^{\lambda+\bold 1}}\\
=&\sum_{n_1\geq 0\cdots n_k\geq 0}\frac{(-1)^{|\bold n|}}{(2\pi i)^k}\int_{C_k\cdots C_1}e_{n_1}\cdots e_{n_k}z^{\bold n}\prod\limits_{1\leq i<j\leq k}(z_i-z_j)\prod_{i\leq j}(1-z_iz_j)\frac{dz}{z^{\lambda+\bold 1}}\\
=&\sum_{n_1\geq 0\cdots n_k\geq 0}\frac{(-1)^{|\bold n|}}{(2\pi i)^k}\int_{C_k\cdots C_1}e_{n_1}\cdots e_{n_k}\frac{dz_k\cdots dz_1}{z_k\cdots z_1}\\
&\qquad\sum\limits_{\sigma\in S_k,\epsilon_1,\cdots,\epsilon_k=\pm 1}
\epsilon_1\cdots\epsilon_ksgn(\sigma)z^{1-\epsilon_1\sigma(1)-\lambda_1+n_1}_1\cdots z^{k-
\epsilon_k\sigma(k)-\lambda_k+n_k}_k\\
=&\sum\limits_{\sigma\in S_k,\epsilon_1,\cdots,\epsilon_k=\pm 1}(-1)^{|\lambda |}\epsilon_1\cdots\epsilon_ksgn(\sigma)e_{\lambda_1+\epsilon_1\sigma(1)-1}e_{\lambda_2+\epsilon_2\sigma(2)-2}\cdots e_{\lambda_k+\epsilon_k\sigma(k)-k}\\
=&(-1)^{|\lambda |}det(e_{\lambda_i-i+j}-e_{\lambda_i-i-j}),
\end{align*}
where $z^{\bold n}=z_1^{n_1}\cdots z_k^{n_k}$. This completes the proof of (\ref{e:det-symp2}), and the identity (\ref{e:det-symp3}) then follows as a consequence of Theorem \ref{T:dual-bases}.

The formulae (\ref{e:frobenius3}) and (\ref{e:frobenius3}) in Frobenius notation are consequences of
Theorem \ref{T:dual-bases} and Theorem \ref{T:symp}.
In fact (\ref{e:frobenius3}) is derived by first rewriting $Y_{\beta_1-(r-1)}^*Y_{\beta_2-(r-2)}^*\cdots Y_{\beta_r}^*.1$
back to $(-1)^{|\mu|}Y_{\mu}.1$ using Theorem \ref{T:dual-bases} and then reapply Theorem \ref{T:symp},
where $\mu$ is the conjugate diagram of $(\beta_1-(r-1), \beta_2-(r-2), \cdots, \beta_r)$. The
second formula (\ref{e:frobenius4}) is proved similarly.
\end{proof}

We remark that Eq. (\ref{e:det-symp2}) can also be proved by using the third Vandermonde-like identity. In fact we can rewrite the identity as
\begin{equation}
Y^*_{\lambda_1}Y^*_{\lambda_2}\cdots Y^*_{\lambda_k}.1=(-1)^{|\lambda|+k(k-1)/2}det(e_{\lambda_i-i-j+k+1}-e_{\lambda_i-i+j-k-1}).
\end{equation}
This is the same determinant (\ref{e:det-symp2}) by reversing the columns.

For the orthogonal case we have the following result for the vertex operators $W(z)$ and $W^*(z)$,
which is shown in the same way as Theorem \ref{T:symp}--\ref{T:symp2}.
\begin{theorem} \label{T:ortho} For partition $\lambda=(\lambda_1, \cdots, \lambda_k)$ we have
\begin{align}\label{e:det-ortho1}
W_{-\lambda_1}W_{-\lambda_2}\cdots W_{-\lambda_k}.1&=o_{\lambda}=
det(h_{\lambda_i-i+j}-h_{\lambda_i-i-j}), \\ \label{e:det-ortho2}
W_{\lambda_1}^*W_{\lambda_2}^*\cdots W_{\lambda_k}^*.1
&= \frac{(-1)^{|\lambda |}}2det(e_{\lambda_i-i+j}+e_{\lambda_i-i-j+2}).
\end{align}
Therefore for any partition $\lambda$ and its conjugate $\lambda'$ one has that
\begin{equation}
o_{\lambda}=det(h_{\lambda_i-i+j}-h_{\lambda_i-i-j})=\frac{1}2det(e_{\lambda'_i-i+j}+e_{\lambda'_i-i-j+2}).
\end{equation}
Moreover for any partition $(\al|\beta)$ in Frobenius notation one has that
\begin{align}
o_{(\al|\beta)}&=(-1)^{|\beta|+r(r-1)/2}W_{-\al_1-1}\cdots W_{-\al_r-r}W_{\beta_1-(r-1)}^*W_{\beta_2-(r-2)}^*\cdots W_{\beta_r}^*.1, \label{e:frobenius5}\\ \label{e:frobenius6}
o_{(\al|\beta)}&=(-1)^{|\beta|+r}W_{\beta_1-1}^*\cdots W_{\beta_r-r}^*W_{-\al_1+(r-1)}\cdots W_{-\al_r}.1.
\end{align}
\end{theorem}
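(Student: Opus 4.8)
The plan is to mirror the proofs of Theorems \ref{T:symp} and \ref{T:symp2}, exploiting the relations $W(z)=(1-z^2)Y(z)$ and $W^*(z)=(1-z^2)^{-1}Y^*(z)$ to convert each symplectic computation into its orthogonal counterpart. For (\ref{e:det-ortho1}) I would begin from the normal-ordering formula already established for $Y$ and simply carry along the commuting scalar factors: since $W(z_i)=(1-z_i^2)Y(z_i)$,
\begin{align*}
W(z_1)\cdots W(z_k).1 &= \prod_{i=1}^k(1-z_i^2)\prod_{1\le i<j\le k}\bigl(1-\tfrac{z_j}{z_i}\bigr)(1-z_iz_j)\,H(z_1)\cdots H(z_k)\\
&=\prod_{1\le i<j\le k}\bigl(1-\tfrac{z_j}{z_i}\bigr)\prod_{1\le i\le j\le k}(1-z_iz_j)\,H(z_1)\cdots H(z_k),
\end{align*}
the crucial point being that $\prod_i(1-z_i^2)$ promotes $\prod_{i<j}(1-z_iz_j)$ to $\prod_{i\le j}(1-z_iz_j)$. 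Extracting $W_{-\lambda_1}\cdots W_{-\lambda_k}.1$ as an iterated contour integral over the nested circles $C_1,\dots,C_k$ exactly as in Theorem \ref{T:symp}, I would then invoke the Vandermonde identity (\ref{e:Van2}) in place of (\ref{e:Van1}); because (\ref{e:Van2}) carries the minus sign $z_i^{k-j}-z_i^{k+j}$ and no factor of $2$, the resulting determinant is precisely $\det(h_{\lambda_i-i+j}-h_{\lambda_i-i-j})=o_\lambda$.

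For (\ref{e:det-ortho2}) I would compute with the dual operators. From the operator product expansion for $Y^*(z)Y^*(w)$ together with $Y^*(z)=(1-z^2)W^*(z)$ one reads off $W^*(z)W^*(w)=:W^*(z)W^*(w):(1-wz^{-1})(1-wz)$, so the pairwise contractions for a product of $k$ copies are $\prod_{i<j}(1-z_j/z_i)(1-z_iz_j)$, while $W^*(z).1=(1-z^2)^{-1}Y^*(z).1=\exp\bigl(-\sum_{n\ge1}\tfrac{a_{-n}}{n}z^n\bigr)=\sum_n(-1)^ne_nz^n$. Thus removing the $(1-z^2)$ factor from $Y^*$ has the opposite effect of inserting it into $Y$: the contraction product reverts to its $i<j$ form, so the relevant Vandermonde identity is now (\ref{e:Van1}), whose factor of $2$ produces the $\tfrac12$ and whose plus sign produces $e_{\lambda_i-i+j}+e_{\lambda_i-i-j+2}$, and the overall sign $(-1)^{|\lambda|}$ is supplied by the coefficients $(-1)^n$ in $W^*(z).1$. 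This gives (\ref{e:det-ortho2}).

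The duality statement is then immediate. Combining (\ref{e:det-ortho1}) and (\ref{e:det-ortho2}) with the second identity of Theorem \ref{T:dual-bases}, namely $W_{-\lambda_1}\cdots W_{-\lambda_l}.1=(-1)^{|\lambda|}W^*_{\lambda_1'}\cdots W^*_{\lambda_k'}.1$, and using $|\lambda|=|\lambda'|$ to cancel $(-1)^{|\lambda|}(-1)^{|\lambda'|}=1$, one obtains $o_\lambda=\det(h_{\lambda_i-i+j}-h_{\lambda_i-i-j})=\tfrac12\det(e_{\lambda_i'-i+j}+e_{\lambda_i'-i-j+2})$. Finally, the Frobenius formulae (\ref{e:frobenius5}) and (\ref{e:frobenius6}) follow verbatim from the argument used for Theorem \ref{T:symp2}: one rewrites the string $W^*_{\beta_1-(r-1)}\cdots W^*_{\beta_r}.1$ as $(-1)^{|\mu|}W_{-\mu_1}\cdots W_{-\mu_l}.1$ via Theorem \ref{T:dual-bases}, where $\mu$ is the conjugate of $(\beta_1-(r-1),\dots,\beta_r)$, and then reapplies (\ref{e:det-ortho1}). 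The one point demanding genuine care throughout is the bookkeeping showing how the single scalar factor $(1-z^2)$ toggles the contraction product between its $\prod_{i<j}$ and $\prod_{i\le j}$ forms, thereby switching the applicable identity between (\ref{e:Van1}) and (\ref{e:Van2}) and hence flipping both the internal sign pattern of the determinant and the presence of the factor $\tfrac12$; once this correspondence is pinned down, every remaining step transcribes the symplectic case.
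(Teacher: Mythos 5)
Your proposal is correct and follows exactly the route the paper intends: the paper proves Theorem \ref{T:ortho} by simply declaring it ``shown in the same way as Theorem \ref{T:symp}--\ref{T:symp2},'' and your write-up supplies precisely those details, with the key observation that the scalar factors $(1-z_i^2)$ toggle the contraction product between $\prod_{i<j}(1-z_iz_j)$ and $\prod_{i\le j}(1-z_iz_j)$, thereby swapping the roles of the Vandermonde identities (\ref{e:Van1}) and (\ref{e:Van2}) relative to the symplectic case. Your sign bookkeeping (the $(-1)^{|\lambda|}$ from $W^*(z).1=\sum_n(-1)^ne_nz^n$), the deduction of the $h$--$e$ duality from Theorem \ref{T:dual-bases}, and the treatment of the Frobenius formulae all match the paper's argument.
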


We can now prove the duality between symplectic Schur functions and orthogonal Schur functions.
\begin{theorem} Under the involution $\omega$
\begin{equation}
\omega(sp_{\lambda})=o_{\lambda'}.
\end{equation}
\end{theorem}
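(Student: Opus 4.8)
The plan is to realize the involution $\omega$ as a conjugation operator on the Fock space $\Lambda$ and to track how it transforms the symplectic vertex operator $Y(z)$ into the dual orthogonal operator $W^*(z)$. Since $\omega$ is an algebra automorphism it fixes the vacuum, $\omega(1)=1$, so the realization $sp_\lambda=Y_{-\lambda_1}\cdots Y_{-\lambda_k}.1$ of Theorem \ref{T:symp} reduces the whole statement to understanding the conjugate $\omega Y_n\omega^{-1}$.

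First I would determine the action of $\omega$ on the Heisenberg generators. As $a_{-n}$ is multiplication by $p_n$ and $\omega(p_n)=(-1)^{n-1}p_n$, one reads off $\omega a_{-n}\omega^{-1}=(-1)^{n-1}a_{-n}$. Because $\omega$ merely rescales the orthogonal basis $p_\lambda$ by signs it is an isometric self-adjoint involution, so from $a_n^*=a_{-n}$ one obtains $\omega a_n\omega^{-1}=(-1)^{n-1}a_n$ as well. Hence $\omega$ sends $a_{\pm n}$ to $(-1)^{n-1}a_{\pm n}$.

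Next I would conjugate the entire vertex operator. Substituting $(-1)^{n-1}z^{\pm n}=-(-z)^{\pm n}$ into the two exponentials defining $Y(z)$, and using that the middle-term datum $z^{-n}+z^n$ scales by $(-1)^n$ under $z\mapsto -z$, the structure of $Y$ is converted precisely into that of $W^*$ evaluated at $-z$; that is, $\omega Y(z)\omega^{-1}=W^*(-z)$. Extracting Fourier coefficients through $Y(z)=\sum_n Y_nz^{-n}$ and $W^*(z)=\sum_n W^*_nz^n$ then yields the clean relation $\omega Y_n\omega^{-1}=(-1)^nW^*_{-n}$. This conjugation identity is the crux of the argument and the step I expect to be the main obstacle, since one must verify that the $(-1)^n$ coming from $z\mapsto -z$ in the middle term combines with the $(-1)^{n-1}$ from the Heisenberg rescaling to reproduce exactly the sign pattern built into the definition of $W^*$.

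Finally I would assemble the pieces. Applying $\omega$ to $sp_\lambda=Y_{-\lambda_1}\cdots Y_{-\lambda_k}.1$, using $\omega(1)=1$ and $\omega Y_{-\lambda_i}\omega^{-1}=(-1)^{\lambda_i}W^*_{\lambda_i}$, gives $\omega(sp_\lambda)=(-1)^{|\lambda|}W^*_{\lambda_1}\cdots W^*_{\lambda_k}.1$. The orthogonal duality of Theorem \ref{T:dual-bases}, applied to the partition $\lambda'$ whose conjugate is $\lambda$, states that $W_{-\lambda'_1}\cdots W_{-\lambda'_j}.1=(-1)^{|\lambda|}W^*_{\lambda_1}\cdots W^*_{\lambda_k}.1$, while its left-hand side equals $o_{\lambda'}$ by Theorem \ref{T:ortho}. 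Combining these identifications gives $\omega(sp_\lambda)=o_{\lambda'}$. As a quicker alternative that avoids the conjugation computation, one may apply the ring homomorphism $\omega$ entrywise to the determinant $sp_\lambda=\tfrac12\det(h_{\lambda_i-i+j}+h_{\lambda_i-i-j+2})$; since $\omega(h_m)=e_m$ this produces $\tfrac12\det(e_{\lambda_i-i+j}+e_{\lambda_i-i-j+2})$, which is exactly the elementary-symmetric determinant for $o_{\lambda'}$ furnished by Theorem \ref{T:ortho} because $(\lambda')'=\lambda$.
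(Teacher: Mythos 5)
Your proposal is correct, and both of your routes differ genuinely from the paper's own argument. The paper proceeds by expanding $sp_{\lambda}=\sum_{\mu}d_{\lambda\mu}s_{\mu}$ and $o_{\lambda}=\sum_{\mu}d'_{\lambda\mu}s_{\mu}$ and computing the transition coefficients as constant terms of explicit rational functions coming from the operator product expansions, namely $d_{\lambda\mu}=\langle Y_{-\lambda}.1, S_{-\mu}.1\rangle$ and $d'_{\lambda\mu}=\langle W^*_{\lambda'}.1, S^*_{\mu'}.1\rangle$; the two constant-term expressions visibly coincide after the substitution $(\lambda,\mu)\mapsto(\lambda',\mu')$, so $d'_{\lambda\mu}=d_{\lambda'\mu'}$, and $\omega(s_{\mu})=s_{\mu'}$ finishes. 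Your main route instead lifts $\omega$ to an operator identity: the computation $\omega a_{\pm n}\omega^{-1}=(-1)^{n-1}a_{\pm n}$ and the sign bookkeeping $(-1)^{n-1}z^{\pm n}=-(-z)^{\pm n}$ do give
\begin{equation*}
\omega\, Y(z)\,\omega^{-1}=W^*(-z), \qquad \omega\, Y_n\,\omega^{-1}=(-1)^n W^*_{-n},
\end{equation*}
and then Theorem \ref{T:dual-bases} applied to $\lambda'$ together with Eq.~(\ref{e:det-ortho1}) closes the argument; I checked the signs and they work out exactly as you claim. This is a sharper, structural statement than the paper proves: it exhibits the Littlewood duality as a conjugation symmetry of the vertex operators themselves, from which the function-level identity falls out. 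Your alternative route (applying the ring homomorphism $\omega$ entrywise to the determinant and using $\omega(h_m)=e_m$ together with the $e$-determinant in Theorem \ref{T:ortho} evaluated at $\lambda'$) is the most economical of the three, but it leans on the full strength of the $e$-determinant formulae, which the paper itself only obtains via Theorem \ref{T:dual-bases}; by contrast, the paper's constant-term computation needs neither Theorem \ref{T:symp2} nor Theorem \ref{T:ortho}'s $e$-versions, and it additionally exposes the common Schur-expansion coefficients $d_{\lambda\mu}$, which are of independent interest. Neither of your routes is circular, since Theorems \ref{T:symp}, \ref{T:dual-bases} and \ref{T:ortho} all precede the duality theorem; the only point worth flagging in a final write-up is the standard (and easily verified) stability of these determinants under appending zero parts, which silently reconciles the sizes of the determinants being compared.
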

\begin{proof} Let $sp_{\lambda}=\sum_{\mu}d_{\lambda\mu}s_{\mu}$. Then it follows from
the matrix coefficients of vertex operators:
\begin{align*}
d_{\lambda\mu}&=<Y_{-\lambda}.1, S_{-\mu}.1>=\mbox{coeff. of $z^{\lambda}w^{\mu}$ of}<Y(z_1)\cdots Y(z_k).1, S(w_1)\cdots S(w_l).1> \\
&=\mbox{CT}_{z^{\lambda}w^{\mu}}\prod_{1\leq i<j\leq l}(1-\frac{w_j}{w_i})\prod_{1\leq i<j\leq k}(1-\frac{z_j}{z_i})(1-z_iz_j)\\
&\qquad\qquad\qquad <:Y(z_1)\cdots Y(z_k):.1, :S(w_1)\cdots S(w_l):.1>\\
&=\mbox{CT}_{z^{\lambda}w^{\mu}}\prod_{1\leq i<j\leq l}(1-\frac{w_j}{w_i})\prod_{1\leq i<j\leq k}(1-\frac{z_j}{z_i})(1-z_iz_j)
\prod_{1\leq i\leq l,1\leq j\leq k}(1-w_iz_j)^{-1},
\end{align*}
where the last identity uses the fact that $:Y(z_1)\cdots Y(z_k):.1=:S(z_1)\cdots S(z_k):.1$
and Proposition \ref{P:schur}.
Similarly let $o_{\lambda}=\sum_{\mu}d_{\lambda\mu}'s_{\mu}$, then using Theorem \ref{T:dual-bases} we have
\begin{align*}
d_{\lambda\mu}'&=<W_{\lambda'}^*.1, S_{\mu'}^*.1>=\mbox{CT}_{z^{\lambda'}w^{\mu'}}<W^*(z_1)\cdots W^*(z_k).1, S^*(w_1)\cdots S^*(w_l).1> \\
&=\mbox{CT}_{z^{\lambda'}w^{\mu'}}\prod_{1\leq i<j\leq l}(1-\frac{w_j}{w_i})\prod_{1\leq i<j\leq k}(1-\frac{z_j}{z_i})(1-z_iz_j)\\
&\qquad\qquad\qquad <:W^*(z_1)\cdots W^*(z_k):.1, :S^*(w_1)\cdots S^*(w_l):.1>\\
&=\mbox{CT}_{z^{\lambda'}w^{\mu'}}\prod_{1\leq i<j\leq l}(1-\frac{w_i}{w_j})\prod_{1\leq i<j\leq k}(1-\frac{z_j}{z_i})(1-z_iz_j)
\prod_{1\leq i\leq l,1\leq j\leq k}(1-w_iz_j)^{-1},
\end{align*}
where we note that $:W^*(z_1)\cdots W^*(z_k):.1=:S^*(z_1)\cdots S^*(z_k):.1$.
Therefore $d_{\lambda\mu}'=d_{\lambda'\mu'}$. Hence $\omega(sp_{\lambda})=o_{\mu'}$ due to $\omega(s_{\lambda})=s_{\lambda'}$.
\end{proof}

If we introduce the symmetric function $\hat{h}_n:=h_n-h_{n-2}$ and $\hat{e}_n:=e_n-e_{n-2}$ for $n\in\mathbb N$, then
\begin{align}
h_n&=\hat{h}_n+\hat{h}_{n-2}+\hat{h}_{n-4}+\cdots\\
e_n&=\hat{e}_n+\hat{e}_{n-2}+\hat{e}_{n-4}+\cdots
\end{align}
Subsequently $\Lambda=\mathbb Z[h_1, h_2, \cdots ]=\mathbb Z[\hat{h}_1, \hat{h}_2, \cdots ]
=\mathbb Z[e_1, e_2, \cdots ]=\mathbb Z[\hat{e}_1, \hat{e}_2, \cdots ]$.
We also introduce $\check{h}_n$ and $\check{e}_n$ by
\begin{align}
\check{h}_n&=h_n+h_{n-2}+h_{n-4}+\cdots\\
\check{e}_n&=e_n+e_{n-2}+e_{n-4}+\cdots
\end{align}

\begin{theorem} \label{T:fourdet}
In terms of the generators $h_n, e_n$ and the new generators
$\hat{h}_n$, $\hat{e}_n$, $\check{h}_n$ and $\check{e}_n$, we have that
\begin{align}\label{det1}
sp_{\lambda}&=\frac12det(h_{\lambda_i-i+j}+h_{\lambda_i-i-j+2})=det(\check{h}_{\lambda_i-i+j}-\check{h}_{\lambda_i-i-j})\\
&=det(e_{\lambda'_i-i+j}-e_{\lambda'_i-i-j})=\frac12det(\hat{e}_{\lambda'_i-i+j}+\hat{e}_{\lambda'_i-i-j+2}),\label{det2}\\
o_{\lambda}&=det({h}_{\lambda_i-i+j}-{h}_{\lambda_i-i-j})=\frac12det(\hat{h}_{\lambda_i-i+j}+\hat{h}_{\lambda_i-i-j+2})
\label{det3}\\
&=\frac12det(e_{\lambda'_i-i+j}+e_{\lambda'_i-i-j+2})=det(\check{e}_{\lambda'_i-i+j}-\check{e}_{\lambda'_i-i-j}). \label{det4}
\end{align}
\end{theorem}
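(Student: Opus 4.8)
The plan is to notice first that four of the eight displayed equalities are already established and require no further work: the equality $sp_\lambda=\frac12\det(h_{\lambda_i-i+j}+h_{\lambda_i-i-j+2})$ is the definition (\ref{e:symp}) reproved in Theorem \ref{T:symp}, the equality $o_\lambda=\det(h_{\lambda_i-i+j}-h_{\lambda_i-i-j})$ is (\ref{e:ortho}) reproved in Theorem \ref{T:ortho}, while the conjugate forms $sp_\lambda=\det(e_{\lambda'_i-i+j}-e_{\lambda'_i-i-j})$ and $o_\lambda=\frac12\det(e_{\lambda'_i-i+j}+e_{\lambda'_i-i-j+2})$ are exactly (\ref{e:det-symp3}) and the corresponding line of Theorem \ref{T:ortho}. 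Thus only the four determinants built from the modified generators $\check h,\hat e,\hat h,\check e$ remain, and I claim they all descend from a single elementary determinant identity, once one exploits that the operations $\hat{\phantom{x}}$ and $\check{\phantom{x}}$ are mutually inverse.

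The key step is the following lemma, to be proved purely by column operations. Let $(g_n)_{n\in\mathbb Z}$ be any sequence with $g_n=0$ for $n<0$, set $\check g_n=g_n+g_{n-2}+g_{n-4}+\cdots$ so that $\check g_n-\check g_{n-2}=g_n$, and let $c_1,\dots,c_k$ be arbitrary integers. Then
\[
\det(\check g_{c_i+j}-\check g_{c_i-j})_{1\le i,j\le k}=\frac12\det(g_{c_i+j}+g_{c_i-j+2})_{1\le i,j\le k}.
\]
To prove it I would write $B_{ij}=\check g_{c_i+j}-\check g_{c_i-j}$ and $A_{ij}=g_{c_i+j}+g_{c_i-j+2}$, and record three relations that follow at once from $\check g_n-\check g_{n-2}=g_n$: namely $B_{i1}=g_{c_i+1}=\frac12A_{i1}$, $B_{i2}=g_{c_i+2}+g_{c_i}=A_{i2}$, and $B_{ij}-B_{i,j-2}=g_{c_i+j}+g_{c_i-j+2}=A_{ij}$ for $j\ge3$. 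Now in $\det B$ subtract column $j-2$ from column $j$ for $j=k,k-1,\dots,3$ (sweeping right to left, so each column $j-2$ used is still untouched); this replaces columns $3,\dots,k$ by $A_{\cdot3},\dots,A_{\cdot k}$ without changing the determinant, leaving $\det B=\det(B_{\cdot1},B_{\cdot2},A_{\cdot3},\dots,A_{\cdot k})$. Since $B_{\cdot2}=A_{\cdot2}$ and $B_{\cdot1}=\frac12A_{\cdot1}$, pulling the scalar $\frac12$ out of the first column gives $\det B=\frac12\det A$, which is the asserted identity.

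With the lemma in hand the four remaining formulae are four specializations. Taking $g=h$ and $c_i=\lambda_i-i$ yields the $\check h$-form of $sp_\lambda$, and taking $g=e$ and $c_i=\lambda'_i-i$ yields the $\check e$-form of $o_\lambda$ (its right-hand side being the already-known $\frac12\det(e+e)$ of Theorem \ref{T:ortho}). For the hatted forms I would use that $\check{\hat f}_n=\sum_{t\ge0}(f_{n-2t}-f_{n-2t-2})=f_n$ telescopes, so $\hat{\phantom{x}}$ and $\check{\phantom{x}}$ are inverse: applying the lemma with $g=\hat h$, $c_i=\lambda_i-i$ turns its left side into $\det(h_{\lambda_i-i+j}-h_{\lambda_i-i-j})=o_\lambda$ and its right side into $\frac12\det(\hat h+\hat h)$, giving the orthogonal $\hat h$-identity, and symmetrically $g=\hat e$, $c_i=\lambda'_i-i$ gives the symplectic $\hat e$-identity, now using $sp_\lambda=\det(e-e)$ from (\ref{e:det-symp3}).

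I expect the only delicate point to be the bookkeeping of the column sweep together with the two boundary columns $j=1,2$: the asymmetry producing the factor $\frac12$ lives entirely in column $1$, where $A_{i1}$ double-counts the single surviving term $g_{c_i+1}$, and one must check that the recurrence $\check g_n-\check g_{n-2}=g_n$ persists through negative indices under the conventions $g_n=0$ and $\check g_n=0$ for $n<0$. Beyond this routine verification there is no real obstacle; the genuine idea is the recognition that $\hat{\phantom{x}}$ and $\check{\phantom{x}}$ are mutually inverse, which collapses all four new determinant expressions to this one column-operation computation.
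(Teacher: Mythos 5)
Your proof is correct and follows essentially the same route as the paper: the paper's own argument is exactly this column manipulation (subtracting column $j-2$ from column $j$, justified by the relation $\check g_n-\check g_{n-2}=g_n$ with the boundary columns $j=1,2$ producing the factor $\tfrac12$), carried out explicitly for the $\hat e$-form of $sp_\lambda$ starting from $\det(e_{\lambda'_i-i+j}-e_{\lambda'_i-i-j})$, with the remaining cases declared similar. Your only refinement is packaging that computation as one abstract lemma and using the mutual inversion of $\hat{\phantom{x}}$ and $\check{\phantom{x}}$ to dispatch all four new determinants uniformly, which is a cleaner organization of the same idea.
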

\begin{proof} Consider the determinant $sp_{\lambda}=det(e_{\lambda'_i-i+j}-e_{\lambda'_i-i-j})$.
If we denote for $n\in \mathbb Z_+$ the column vector
\begin{equation*}
[e_n]=\begin{bmatrix} e_n \\ e_{n-1} \\ \vdots \\ e_{n-k+1}\end{bmatrix}
\end{equation*}
Then  $sp_{\lambda}=|[e_{\lambda'_1}]-[e_{\lambda'_1-2}], [e_{\lambda'_1+1}]-[e_{\lambda'_1-3}], \cdots,
[e_{\lambda'_1+k-1}]-[e_{\lambda'_1-k-1}]|$. Note that the $i$th column
\begin{align*}
[e_{\lambda'_1+i-1}]-[e_{\lambda'_1-i-1}]&=[\hat{e}_{\lambda'_1+i-1}]+[\hat{e}_{\lambda'_1+i-3}]+\cdots + [\hat{e}_{\lambda'_1-i+1}]\\
&=[\hat{e}_{\lambda'_1+i-1}]+ [\hat{e}_{\lambda'_1-i+1}]+ ([\hat{e}_{\lambda'_1+i-3}]+\cdots +[\hat{e}_{\lambda'_1-i+3}])
\end{align*}
where the parentheses are exactly the $(i-2)$th column, so it can be removed. Thus by successively subtracting
from previous columns we have that
\begin{equation*}
sp_{\lambda}=|[\hat{e}_{\lambda'_1}], [\hat{e}_{\lambda'_1+1}]+[\hat{e}_{\lambda'_1-1}], \cdots, [\hat{e}_{\lambda'_1+k-1}]+[\hat{e}_{\lambda'_1-k+1}]|,
\end{equation*}
which is exactly $\frac12det(\hat{e}_{\lambda'_i+j-i}+\hat{e}_{\lambda'_i-j-i+2})$. The other identities are proved similarly.
\end{proof}
We remark that the left identities (\ref{det1}) and (\ref{det3}) were due to Weyl \cite{W} and the left
identities (\ref{det2}) and (\ref{det4}) were
found by Koike and Terado \cite{KT} (see also \cite{FH}, \cite{SZ}). The right identities (\ref{det2}) and (\ref{det4})
are consequences of the duality and
the right identities (\ref{det3}) and (\ref{det1}), which were due to
Shimozono-Zabrocki (cf. Prop. 12 in \cite{SZ}).

\medskip

\centerline{\bf Acknowledgments}
We thank the referees for stimulative remarks which have
made the paper better. NJ gratefully acknowledges the support of
Simons Foundation, Humboldt Foundation, and NSFC as well as
Max-Planck Institute for Mathematics, Bonn
and Max-Planck Institute for Mathematics
in the Sciences, Leipzig for hospitality during the work.

\bigskip

\bibliographystyle{amsalpha}

\end{document}